 \theoremstyle{plain}
  \newtheorem{theorem}{Theorem}[section]
  \newtheorem{corollary}[theorem]{Corollary}
  \newtheorem{proposition}[theorem]{Proposition}
  \newtheorem{lemma}[theorem]{Lemma}  
  \newtheorem{definition}[theorem]{Definition}
  \newtheorem{remark}[theorem]{Remark}
 \newtheorem{example}[theorem]{Example}
\newcommand{\II}{{\mathbb{I}}}
\newcommand{\ol}[1]{\overline{#1}}
\newcommand{\ep}{{\mathbb {E}}}
\newcommand{\pr}{{\mathbb {P}}}
\newcommand{\blah}[1]{}
\newcommand{\diy}{\begin{displaystyle}}
\newcommand{\eiy}{\end{displaystyle}}
\title{Relaxation of monotone coupling conditions: Poisson approximation and beyond}
\author{Fraser Daly\footnote{Department of Actuarial Mathematics and Statistics and the Maxwell Institute for Mathematical Sciences, Heriot-Watt University, Edinburgh EH14 4AS, UK.  E-mail: f.daly@hw.ac.uk;  Tel: +44 (0)131 451 3212; Fax: +44 (0)131 451 3249} \and Oliver Johnson\footnote{School of Mathematics, University of Bristol, University Walk, Bristol BS8 1TW, UK. Email: o.johnson@bristol.ac.uk; Tel: +44 (0)117 928 8632}}
\date{\today}
\begin{document}

\maketitle

\noindent{\bf Abstract} It is well-known that assumptions of monotonicity in size-bias couplings may be used to prove simple, yet powerful, Poisson approximation results.  Here we show how these assumptions may be relaxed, establishing explicit Poisson approximation bounds (depending on the first two moments only) for random variables which satisfy an approximate version of these monotonicity conditions.  These are shown to be effective for models where an underlying random variable of interest is contaminated with noise.  We also give explicit Poisson approximation bounds for sums of associated or negatively associated random variables.  Applications are given to epidemic models, extremes, and random sampling.  Finally, we also show how similar techniques may be used to relax the assumptions needed in a  Poincar\'e inequality and in a normal approximation result.

\vspace{12pt}
\noindent{\bf MSC 2010} Primary: 62E17. Secondary: 60E15, 60F05, 62E10
\vspace{12pt}

\section{Introduction}

It is well-known   that in many situations exploiting negative or positive dependence structure  is an effective way to establish Poisson approximation results.  For example, Barbour, Holst and Janson \cite{barbour} treat many applications of Poisson approximation for sums of (dependent) Bernoulli random variables $X_1,X_2,\ldots,X_n$ which are negatively related, that is, satisfy
\begin{equation}\label{eq:nr}
\mathbb{E}[\phi(X_1,\ldots,X_{i-1},X_{i+1},\ldots,X_n)|X_i=1]\leq \mathbb{E}[\phi(X_1,\ldots,X_{i-1},X_{i+1},\ldots,X_n)]\,,
\end{equation}
for all $i=1,\ldots,n$ and increasing functions $\phi:\{0,1\}^{n-1}\mapsto\{0,1\}$.

The Poisson approximation bounds given in \cite{barbour} under this negative relation assumption have the advantage of only depending on the first two moments of $W$.  In  general, such bounds require much more detailed information about the $X_i$ in order to be evaluated.  More precisely, let $d_{TV}$ be the total variation distance, defined for non-negative integer-valued random variables $W$ and $Z$ by
$$
d_{TV}(\mathcal{L}(W),\mathcal{L}(Z))=\sup_{A\subseteq\mathbb{Z}^+}|\mathbb{P}(W\in A)-\mathbb{P}(Z\in A)|\,,
$$   
where $\mathbb{Z}^+=\{0,1,\ldots\}$.  Then if $W$ is a sum of negatively related Bernoulli random variables $X_1,\ldots,X_n$ with $\lambda=\mathbb{E}W$, and $Z\sim\mbox{Po}(\lambda)$ has a Poisson distribution with mean $\lambda$, then Corollary 2.C.2 of \cite{barbour} gives the bound
\begin{equation}\label{eq:tvub}
d_{TV}(\mathcal{L}(W),\mathcal{L}(Z))\leq(1-e^{-\lambda})\left(1-\frac{\mbox{Var}(W)}{\lambda}\right)\,.
\end{equation}
This upper bound is considerably simpler to evaluate in practice than more general Poisson approximation bounds, many of which involve, for example, $\mbox{Cov}(X_i,X_j)$ for each $i$ and $j$.  Thus, if negative relation can be shown to hold, it is worthwhile taking advantage of it.  Straightforward Poisson approximation bounds are also available 
when $X_1,\ldots,X_n$ are positively related, a property which is defined analogously to (\ref{eq:nr}), but with the inequality reversed.  

Within a more general approximation framework, Daly, Lef\`evre and Utev \cite{dlu12} recently showed that the upper bound of (\ref{eq:tvub}) continues to hold when $W$ is a non-negative, integer-valued random variable with mean $\lambda$, under the assumption that
\begin{equation}\label{eq:ord}
W+1\geq_{st}W^*\,,
\end{equation}
where $W^*$ has the $W$-size-biased distribution, defined by
\begin{equation} \label{eq:sizebias}
\mathbb{P}(W^*=j)=\frac{j\mathbb{P}(W=j)}{\mathbb{E}W}\,,
\end{equation}
and `$\geq_{st}$' denotes the usual stochastic ordering.  If $W$ is a sum of negatively related Bernoulli random variables, then $W$ is also shown to satisfy (\ref{eq:ord}), so (\ref{eq:ord}) is referred to as a negative dependence condition for $W$. Daly, Lef\`evre and Utev \cite{dlu12} also show simple Poisson approximation results under an analogous positive dependence condition expressed in terms of monotonicity of the size-biased coupling.  

Daly and Johnson \cite{dj13}   proved a simple Poincar\'e inequality for $W$ under the same condition (\ref{eq:ord}), (again, with an upper bound depending only on the first two moments of $W$).  Boundedness and monotonicity conditions of a similar type are also exploited in several of the normal approximation theorems discussed by Chen, Goldstein and Shao \cite{cgs11}. 

Our aim in this work is to demonstrate how the strict condition of assumptions such as (\ref{eq:ord}) may be relaxed.
The structure of the remainder of the paper is as follows. In Section \ref{sec:pois} we derive  Poisson approximation bounds (Theorem \ref{thm:po}) for random variables $W$ which come close (in a certain sense) to satisfying either (\ref{eq:ord}) or its positive dependence analogue.  This relaxation of these conditions is motivated by recent work of Cook, Goldstein and Johnson \cite{cgj15}, who established a concentration inequality which can control the spectral gap of random graphs. 
  In Section \ref{sec:noise} we use Theorem \ref{thm:po}  to derive explicit Poisson approximation results for models where an underlying random variable of interest $W$, satisfying (\ref{eq:ord}), or its positive dependence analogue, is contaminated by noise. In Section \ref{sec:applications}, we show how these results can be applied to the Martin-L\"of epidemic model and extremes of associated random variables.

In Section \ref{sec:assoc} we consider Poisson approximation for sums of associated or negatively associated random variables, illustrated by an application to simple random sampling. 
In Section \ref{sec:poin}, we show how such relaxed conditions imply  a  Poincar\'e inequality.   Finally, in Section \ref{sec:norm} we 
give an analogous relaxation of strict boundedness or monotonicity conditions for continuous random variables, to obtain normal approximation results.
\section{Poisson approximation results}\label{sec:pois}
In this section, we begin by proving a Poisson approximation bound (Theorem \ref{thm:po} below) under conditions in the spirit of \cite{cgj15}, which relax strict monotonicity assumptions such as (\ref{eq:ord}).  As we will see, the upper bounds we obtain have a form similar to that of (\ref{eq:tvub}).  The condition (\ref{eq:ndcond}) below is employed directly by \cite{cgj15}, where a random variable $Y$ satisfying this condition is said to be `$1$-bounded with probability $p$ for the upper tail'.  The analogous condition (\ref{eq:pdcond}) was not used by \cite{cgj15}, but is in the same spirit.

We then examine situations in which these Poisson approximation results may be applied.  Section \ref{sec:noise} considers models where we have an underlying random variable $W$, satisfying a property such as (\ref{eq:ord}), which is then contaminated with  independent noise.  

\subsection{A Poisson approximation theorem}

Our main Poisson approximation result is given in Theorem \ref{thm:po} below, and is proved using the Stein--Chen method.  For an introduction to the Stein--Chen method for Poisson approximation, the interested reader is referred to \cite{barbour}, \cite{e05} and references therein.  

Throughout this section, given a parameter $\lambda\geq0$ and a set $A\subseteq\mathbb{Z}^+$, we write $g_A$ for the solution to the Stein--Chen equation
\begin{equation} \label{eq:stein}
\lambda g_A(j+1) - j g_A(j) = \II( j \in A) - \Pi_{\lambda}(A),
\end{equation}
where $\Pi_\lambda(A)=\mathbb{P}(Z\in A)$ and $Z\sim\mbox{Po}(\lambda)$.  By convention, we take $g_A(0)=0$ for each $A$.  
Writing $\Delta f(x) = f(x+1) - f(x)$ for any function $f$, we recall the standard bound \cite[Eq. (1.17)]{barbour}
\begin{equation}\label{eq:magic}
\sup_x \left| \Delta g_A(x) \right| \leq \lambda^{-1}(1- e^{-\lambda})\,.
\end{equation}

For any non-negative, integer-valued random variable $Y$, we write the upper tail
$$
\ol{P}_Y(y) = \sum_{z=y}^\infty \mathbb{P}(Y=z)\,.
$$
\begin{lemma} \label{lem:key}
For any non-negative, integer-valued random variable $Y$ with $\mathbb{E}Y = \mu >0$
\begin{equation} \label{eq:setprob}
\mathbb{P}( Y \in A) - \Pi_\lambda(A) =  \sum_{k=0}^\infty \left( \lambda \ol{P}_Y(k)    - \mu  \ol{P}_{Y^*}(k+1)  \right) \Delta g_A(k)\,,
\end{equation}
for any $\lambda>0$. Here, and for the rest of the paper, $Y^*$ represents the size-biased version of $Y$ defined in \eqref{eq:sizebias}.
\end{lemma}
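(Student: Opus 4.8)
The plan is to start from the Stein--Chen equation \eqref{eq:stein} evaluated at $Y$ and take expectations. This gives
$$
\mathbb{P}(Y\in A) - \Pi_\lambda(A) = \mathbb{E}\bigl[\lambda g_A(Y+1) - Y g_A(Y)\bigr].
$$
The term $\mathbb{E}[Y g_A(Y)]$ is exactly $\mu\, \mathbb{E}[g_A(Y^*)]$ by the defining property \eqref{eq:sizebias} of the size-biased distribution, so the right-hand side becomes $\lambda\,\mathbb{E}[g_A(Y+1)] - \mu\,\mathbb{E}[g_A(Y^*)]$. Since $g_A(0)=0$, the first expectation is $\lambda\,\mathbb{E}[g_A(Y+1)] = \lambda\sum_{k\ge 0}\mathbb{E}[\Delta g_A(k)\,\II(Y\ge k)]$ by writing $g_A(Y+1) = \sum_{k=0}^{Y}\Delta g_A(k)$ (telescoping, valid since $g_A(0)=0$) and exchanging sum and expectation; the second expectation is handled analogously, writing $g_A(Y^*) = \sum_{k=0}^{Y^*-1}\Delta g_A(k) = \sum_{k\ge 0}\Delta g_A(k)\,\II(Y^*\ge k+1)$, again using $g_A(0)=0$ to start the telescoping sum at $0$.

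Taking expectations inside these sums turns $\mathbb{E}[\II(Y\ge k)]$ into $\ol{P}_Y(k)$ and $\mathbb{E}[\II(Y^*\ge k+1)]$ into $\ol{P}_{Y^*}(k+1)$, yielding precisely \eqref{eq:setprob}. The main technical point to check is the interchange of summation and expectation: this is justified because $\sup_x|\Delta g_A(x)|$ is bounded by \eqref{eq:magic}, so the summands are dominated in absolute value by $\lambda^{-1}(1-e^{-\lambda})\,\mathbb{P}(Y\ge k)$ (resp.\ with $Y^*$), and $\sum_k \mathbb{P}(Y\ge k) = \mathbb{E}Y + 1 < \infty$ while $\sum_k \mathbb{P}(Y^*\ge k+1) = \mathbb{E}Y^* < \infty$ (finiteness of $\mathbb{E}[Y^*]$ follows provided $\mathbb{E}[Y^2]<\infty$; if the second moment is infinite the identity still holds with both sides read as $+\infty$ on the appropriate side, or one restricts to the relevant case, but for the applications $\mathbb{E}[Y^2]<\infty$ is assumed). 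So dominated convergence (or Fubini--Tonelli applied to the positive and negative parts) applies.

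I expect the main obstacle to be purely bookkeeping: correctly lining up the indices so that the $\ol{P}_{Y^*}$ term is evaluated at $k+1$ rather than $k$, which comes from the fact that $g_A(Y^*)$ telescopes up to $Y^*-1$ whereas $g_A(Y+1)$ telescopes up to $Y$. Getting the shift right is where a careless computation would go wrong, but there is no genuine difficulty once the telescoping is written out carefully and the convention $g_A(0)=0$ is invoked at the right place.
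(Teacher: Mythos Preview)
Your proposal is correct and follows essentially the same route as the paper: take expectations in the Stein--Chen equation, rewrite $\mathbb{E}[Yg_A(Y)]=\mu\,\mathbb{E}[g_A(Y^*)]$ via size-biasing, and then apply summation by parts (your telescoping argument) using $g_A(0)=0$ to convert $\mathbb{E}[g_A(Y+1)]$ and $\mathbb{E}[g_A(Y^*)]$ into sums against the tails $\ol{P}_Y(k)$ and $\ol{P}_{Y^*}(k+1)$. You are in fact slightly more careful than the paper in justifying the interchange of summation and expectation via \eqref{eq:magic}.
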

The proof of Lemma \ref{lem:key} is deferred until Appendix \ref{sec:lem}.  We now apply this representation to prove the following.
\begin{theorem}\label{thm:po}
Let $Y$ be a non-negative, integer-valued random variable with $\mathbb{E}Y=\mu>0$ and $\mbox{Var}(Y)=\sigma^2$.
\begin{enumerate}
\item[(i)] Suppose there is a coupling of $(Y,Y^*)$ and $p\in(0,1]$ such that 
\begin{equation}\label{eq:ndcond}
\mathbb{P}(Y^*\leq Y+1|Y^*\geq x)\geq p\,,
\end{equation}
for all $x$.  Then, for any $\lambda>0$,
\begin{equation} \label{eq:negdepbd}
d_{TV}(\mathcal{L}(Y),\mbox{Po}(\lambda))\leq (1-e^{-\lambda})\left\{1+\mu+\left(\frac{|\mu-p\lambda|}{\lambda}-p\right)\left(\frac{\sigma^2}{\mu}+\mu\right)\right\}\,.
\end{equation}
\item[(ii)] Suppose there is a coupling $(Y,Y^*)$, $p\in(0,1]$, and a non-negative integer-valued random variable $Z$ (which may be dependent on $(Y,Y^*)$) such that
\begin{equation}\label{eq:pdcond}
\mathbb{P}(Y^*\geq Y+1-Z|Y+1-Z\geq x)\geq p\,,
\end{equation}
for all $x$.  Then for any $\lambda>0$
\begin{multline*}
d_{TV}(\mathcal{L}(Y),\mbox{Po}(\lambda))\\ \leq
(1-e^{-\lambda})\left\{2p\mathbb{E}Z+\left(\frac{|\mu-\lambda|}{\lambda}+1\right)\left(\frac{\sigma^2}{\mu}+\mu\right)+(1-2p)(\mu+1)\right\}\,.
\end{multline*}
\end{enumerate}
\end{theorem}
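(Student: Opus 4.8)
The plan is to use the exact representation in Lemma~\ref{lem:key} as the starting point for both parts, and then bound the right-hand side of \eqref{eq:setprob} by controlling the quantity $\lambda\ol{P}_Y(k)-\mu\ol{P}_{Y^*}(k+1)$ using the monotonicity-type hypotheses \eqref{eq:ndcond} and \eqref{eq:pdcond} together with the magic factor bound \eqref{eq:magic}. Taking absolute values in \eqref{eq:setprob} and using $|\Delta g_A(k)|\le \lambda^{-1}(1-e^{-\lambda})$ uniformly in $A$ and $k$ gives
\[
d_{TV}(\mathcal{L}(Y),\mbox{Po}(\lambda)) \le \frac{1-e^{-\lambda}}{\lambda}\sum_{k=0}^\infty \bigl|\lambda\ol{P}_Y(k)-\mu\ol{P}_{Y^*}(k+1)\bigr|\,,
\]
so everything reduces to estimating $\sum_k|\lambda\ol{P}_Y(k)-\mu\ol{P}_{Y^*}(k+1)|$. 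The key elementary identities I will need are $\sum_{k\ge 0}\ol{P}_Y(k)=\mathbb{E}Y=\mu$ and $\sum_{k\ge 0}\ol{P}_{Y^*}(k+1)=\mathbb{E}(Y^*-1)=\mathbb{E}Y^*-1=\frac{\sigma^2}{\mu}+\mu-1$ (using $\mathbb{E}Y^*=\mathbb{E}(Y^2)/\mu$); these will produce the $\frac{\sigma^2}{\mu}+\mu$ terms appearing in both bounds.

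For part~(i), the idea is to split $\lambda\ol{P}_Y(k)-\mu\ol{P}_{Y^*}(k+1)$ as $\bigl(\lambda\ol{P}_Y(k)-p\lambda\,\ol{P}_{Y^*}(k+1)\cdot\tfrac{\mu}{p\lambda}\bigr)$ — more precisely to write it as a convex-type combination and use the coupling inequality $\ol{P}_{Y^*}(k+1)=\mathbb{P}(Y^*\ge k+1)$ compared against $\mathbb{P}(Y+1\ge k+1)=\ol{P}_Y(k)$. Condition \eqref{eq:ndcond} says precisely that, conditionally on the event $\{Y^*\ge x\}$, we have $Y^*\le Y+1$ with probability at least $p$; this lets one write, for each threshold, $\ol{P}_{Y^*}(k+1)\le$ something expressible via $\ol P_Y(k)$ up to a defect that is absorbed by the $(1-p)$ terms. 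Concretely I would show $p\bigl(\ol{P}_{Y^*}(k+1)-\ol{P}_Y(k)\bigr)\le (1-p)\ol{P}_{Y^*}(k+1)$ or a similar pointwise inequality, then write
\[
\bigl|\lambda\ol P_Y(k)-\mu\ol P_{Y^*}(k+1)\bigr|\le \bigl|\mu-p\lambda\bigr|\,\tfrac{\ol P_{Y^*}(k+1)}{p}\cdot\text{(something)} + \lambda\,p\bigl|\ol P_Y(k)-\ol P_{Y^*}(k+1)\bigr|+\cdots,
\]
and sum over $k$; the $\mu+1$ and $1+\mu$ constants come from the crude bounds $\ol P_Y(0)=1$, $\sum_k\ol P_Y(k)=\mu$. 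The target form $\bigl(\tfrac{|\mu-p\lambda|}{\lambda}-p\bigr)\bigl(\tfrac{\sigma^2}{\mu}+\mu\bigr)$ strongly suggests that the factor multiplying $\sum_k\ol P_{Y^*}(k+1)=\tfrac{\sigma^2}{\mu}+\mu-1$ is exactly $\tfrac{|\mu-p\lambda|}{\lambda}-p$, with the ``$-1$'' recombining with the $1+\mu$ to give a clean constant; I would reverse-engineer the split so that this bookkeeping works out.

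For part~(ii) the structure is the same but one works with $Y+1-Z$ in place of $Y+1$: condition \eqref{eq:pdcond} says $Y^*\ge Y+1-Z$ with conditional probability at least $p$, so now $\ol P_{Y^*}(k+1)\ge$ (a $p$-fraction of) $\mathbb{P}(Y+1-Z\ge k+1)$, and the discrepancy between $\mathbb{P}(Y+1-Z\ge k+1)$ and $\ol P_Y(k)=\mathbb{P}(Y\ge k)$ is controlled because shifting by the non-negative $Z$ changes the sum over $k$ by at most $\mathbb{E}Z$ (telescoping: $\sum_k|\mathbb{P}(Y\ge k)-\mathbb{P}(Y+1-Z\ge k+1)|\le \mathbb{E}Z$ up to boundary terms). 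This is where the $2p\mathbb{E}Z$ term enters, with the factor $2$ presumably from applying the coupling bound in both directions, and the $(1-2p)(\mu+1)$ term from the defect when \eqref{eq:pdcond} fails; the $\bigl(\tfrac{|\mu-\lambda|}{\lambda}+1\bigr)\bigl(\tfrac{\sigma^2}{\mu}+\mu\bigr)$ piece again tracks $\sum_k\ol P_{Y^*}(k+1)$ with coefficient bounded by $\tfrac{|\mu-\lambda|}{\lambda}+1$. The main obstacle, in both parts, will be organizing the pointwise decomposition of $\lambda\ol P_Y(k)-\mu\ol P_{Y^*}(k+1)$ so that: (a) the conditional-probability hypothesis can actually be invoked at the right threshold (the conditioning event in \eqref{eq:ndcond}/\eqref{eq:pdcond} is on $\{Y^*\ge x\}$ resp.\ $\{Y+1-Z\ge x\}$, which must be matched to the index $k$ in the sum), and (b) the constants collapse to exactly the stated closed forms rather than something merely of the same order. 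I expect (a) — correctly translating ``$1$-bounded with probability $p$ for the upper tail'' into a usable inequality between the tail functions $\ol P_{Y^*}$ and $\ol P_Y$ at each level $k$ — to be the genuinely delicate step, while (b) is careful but routine once (a) is in place.
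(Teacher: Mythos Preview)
Your approach is the same as the paper's, and the overall architecture (Lemma~\ref{lem:key}, then \eqref{eq:magic}, then control of $\sum_k|\lambda\ol P_Y(k)-\mu\ol P_{Y^*}(k+1)|$) is exactly right. Two corrections are needed, however.

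First, your tail-sum identities are off by one: $\sum_{k\ge 0}\ol P_Y(k)=\sum_{k\ge 0}\mathbb P(Y\ge k)=1+\mu$, not $\mu$, and $\sum_{k\ge 0}\ol P_{Y^*}(k+1)=\sum_{m\ge 1}\mathbb P(Y^*\ge m)=\mathbb E Y^*=\tfrac{\sigma^2}{\mu}+\mu$, not $\mathbb E Y^*-1$. These are precisely the constants that make the bookkeeping close up, so getting them right matters.

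Second, what you flag as the ``genuinely delicate step'' (a) is in fact a two-line computation, and once you see it the rest is routine. From \eqref{eq:ndcond} one writes
\[
\mathbb P(Y^*\ge x)=\frac{\mathbb P(Y^*\le Y+1,\,Y^*\ge x)}{\mathbb P(Y^*\le Y+1\mid Y^*\ge x)}\le\frac{\mathbb P(Y+1\ge x)}{p},
\]
i.e.\ the clean pointwise inequality $p\,\ol P_{Y^*}(k+1)\le \ol P_Y(k)$ for every $k$. This is stronger and simpler than the inequality you propose, and its consequence is crucial: after splitting $\lambda\ol P_Y(k)-\mu\ol P_{Y^*}(k+1)=\lambda\bigl(\ol P_Y(k)-p\,\ol P_{Y^*}(k+1)\bigr)-(\mu-p\lambda)\ol P_{Y^*}(k+1)$, the first bracket is \emph{non-negative}, so the absolute value drops and the sum is exactly $(1+\mu)-p\,\mathbb E Y^*$. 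No reverse-engineering is needed. The same manoeuvre applied to \eqref{eq:pdcond} gives $p\,\mathbb P(Y+1-Z\ge x)\le \mathbb P(Y^*\ge x)$, from which part~(ii) follows by the analogous split.
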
 
\begin{proof}
\begin{enumerate}
\item[(i)] Under the assumptions of the first part of the theorem,
\begin{multline}\label{eq:tails}
\mathbb{P}(Y^*\geq x)=\frac{\mathbb{P}(Y^*\leq Y+1\mbox{ and }Y^*\geq x)}{\mathbb{P}(Y^*\leq Y+1 | Y^*\geq x)}\\
\leq\frac{\mathbb{P}(Y+1\geq x)}{\mathbb{P}(Y^*\leq Y+1 | Y^*\geq x)}\leq\frac{1}{p}\mathbb{P}(Y+1\geq x)\,.
\end{multline}
Note that \eqref{eq:tails}  is equivalent to the stochastic ordering $I_pY^*\leq_{st}Y+1$, where $I_p\sim\mbox{Be}(p)$ is  Bernoulli with mean $p$ independent of all else,
and hence generalizes \eqref{eq:ord}.  This stochastic ordering assumption was considered by \cite{dlu12}, and the upper bound of part (i) now follows from their Proposition 3.  For completeness we give a self-contained proof here.  An analogous proof will  be used for part (ii), in the case of positive dependence, for which no corresponding bound is available elsewhere. 

We now apply the representation of Lemma \ref{lem:key}.  Taking modulus signs and using the triangle inequality and (\ref{eq:magic}), we have that
\begin{eqnarray*}
\left| \pr( Y \in A) - \Pi_\lambda(A) \right|
& \leq &  \sum_{k=0}^\infty \left| \lambda \ol{P}_Y(k)    - \mu  \ol{P}_{Y^*}(k+1)  \right| \left| (\Delta g_A)(k) \right| \\
& \leq &  c_\lambda \left\{
\sum_{k=0}^\infty \left|  \ol{P}_Y(k)    -  p  \ol{P}_{Y^*}(k+1)  \right| + \frac{|\mu -p\lambda|}{\lambda}\sum_{k=0}^\infty \ol{P}_{Y^*}(k+1)\right\}\\
& = &  c_\lambda \left\{
\sum_{k=0}^\infty \left( \ol{P}_Y(k)    -  p  \ol{P}_{Y^*}(k+1)  \right)+\frac{|\mu -p\lambda|}{\lambda}\sum_{k=0}^\infty \ol{P}_{Y^*}(k+1)\right\} \\
& = &  c_\lambda \left\{ 1 + \mu +\left(\frac{|\mu-p\lambda|}{\lambda} - p\right) \ep Y^* \right\}\,,
\end{eqnarray*}
where $c_\lambda=1-e^{-\lambda}$.  Since by \eqref{eq:sizebias} $\mathbb{E}Y^*= \ep Y^2/\ep Y = \frac{\sigma^2}{\mu}+\mu$, the proof is complete.

\item[(ii)] We use a similar argument to the above.  We have
\begin{multline*}
\mathbb{P}(Y+1-Z\geq x)=\frac{\mathbb{P}(Y^*\geq Y+1-Z\mbox{ and }Y+1-Z\geq x)}{\mathbb{P}(Y^*\geq Y+1-Z|Y+1-Z\geq x)}\leq\frac{1}{p}\mathbb{P}(Y^*\geq x)\,,
\end{multline*}
from which an analogous argument to part (i) gives
$$
\left| \pr( Y \in A) - \Pi_\lambda(A) \right|
\leq c_\lambda\left\{2p\mathbb{E}Z+\left(\frac{|\mu-\lambda|}{\lambda}+1\right)\mathbb{E}Y^*+(1-2p)(\mu+1)\right\}\,.
$$
\end{enumerate}
\end{proof}
\begin{remark}
\emph{Taking $p=1$ in Theorem \ref{thm:po}, we recover the results we expect under the stochastic ordering assumptions of \cite{dlu12}.  For example, with $p=1$ and $\lambda=\mu$,  the upper bound of Theorem \ref{thm:po}(i) reduces to  (\ref{eq:tvub}).}
\end{remark}
\begin{example}\label{eg:po}
\emph{Let $Z\sim\mbox{Po}(\lambda)$ and $I_p\sim\mbox{Be}(p)$ be independent.  In the zero-inflated Poisson case where $Y = I_pZ$ and $Y^* = Z+1$, the argument of Example 3.6 of \cite{cgj15} shows that we may apply our Theorem \ref{thm:po}(i), with the $p$ and $\lambda$ we have defined here. Indeed, direct calculation gives
$$ \pr( Y^* \leq Y+1 | Y^* \geq x) = \pr( Z + 1 \leq I_p Z + 1 | Z+1 \geq x) = \pr(1 \leq I_p) = p,$$
so that \eqref{eq:ndcond} holds with equality.
The bound \eqref{eq:negdepbd} is $(1- e^{-\lambda}) ( 1 + p \lambda - p(1+\lambda)) = (1-p)(1- e^{-\lambda})$.
Since we may choose $A = \{ 0 \}$ and obtain
$$ \mathbb{P}(Y=0) - \Pi_{\lambda}(0) = (1-p) + p e^{-\lambda} - e^{-\lambda} = (1-p)(1- e^{-\lambda})\,,$$
the bound \eqref{eq:negdepbd} is  exact  in this case.}
\end{example}

\subsection{Models contaminated with noise}\label{sec:noise}

In this section, we show how the assumptions of Theorem \ref{thm:po} are satisfied by a random variable $Y$, made up of a random variable $W$ which satisfies a monotone coupling assumption such as (\ref{eq:ord}), together with some independent noise.  In this case, we expect that $Y$ will be close to Poisson as long as $W$ is close to Poisson and the noise is small.  This is confirmed in the explicit bounds we derive below.

We consider separately the cases corresponding to parts (i) and (ii) of Theorem \ref{thm:po}, beginning with part (i), the negatively dependent case. 

Consider first the random variable $Y=\xi W+X$, where
\begin{itemize}
\item $W$ and $W^*$ can be coupled such that $W+1\geq W^*$ almost surely.  Note that this is possible if $W$ satisfies (\ref{eq:ord}), which holds, for example, if $W$ may be written as a sum of negatively related Bernoulli random variables (see  \cite{barbour} for applications where this situation arises naturally).
We let $\mathbb{E}W=\nu$.
\item $X$ is a non-negative, integer-valued random variable independent of $(W,W^*)$ with mean $\phi$.
\item $\xi\sim\mbox{Be}(q)$ is a Bernoulli random variable, independent of all else.
\end{itemize}
\begin{theorem}\label{thm:negdep}
With this choice of $Y$, we may apply Theorem \ref{thm:po}(i) with $p=\frac{q^2\nu}{q\nu+\phi}$.
\end{theorem}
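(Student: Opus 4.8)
The plan is to exhibit an explicit coupling of $(Y,Y^{*})$ for which the conditional tail condition~\eqref{eq:ndcond} holds with $p=\frac{q^{2}\nu}{q\nu+\phi}$, which is precisely what is needed to invoke Theorem~\ref{thm:po}(i). Write $\mu=\mathbb{E}Y=q\nu+\phi$. First I would pin down the law of $Y^{*}$: since $\xi,W,X$ are mutually independent, $Y=(\xi W)+X$ is a sum of two independent summands, so the standard size-bias construction for independent sums makes $Y^{*}$ a mixture of $(\xi W)^{*}+X'$ with probability $\mathbb{E}[\xi W]/\mu=q\nu/\mu$ (where $X'$ is an independent copy of $X$) and of $(\xi W)'+X^{*}$ with probability $\phi/\mu$; a one-line calculation from~\eqref{eq:sizebias} gives $(\xi W)^{*}\stackrel{d}{=}W^{*}$, size-biasing removing the atom that $\{\xi=0\}$ contributes at $0$. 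Concretely, take the given coupling of $(W,W^{*})$ with $W^{*}\le W+1$ almost surely, a coupling $(X,X^{*})$ of $X$ with its size-biased version, an independent $\xi\sim\mathrm{Be}(q)$, and an independent indicator $J$ with $\mathbb{P}(J=1)=q\nu/\mu$; then set $Y=\xi W+X$ and $Y^{*}=\II(J=1)\,(W^{*}+X)+\II(J=2)\,(\xi W+X^{*})$. Checking the independences $W^{*}\perp X$ and $X^{*}\perp \xi W$ confirms that $Y^{*}$ indeed has the size-biased law of $Y$.

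Next I would isolate a \emph{good} event $E$ on which $Y^{*}\le Y+1$ holds surely. On $\{J=1\}$ one has $Y^{*}-(Y+1)=W^{*}-\xi W-1\le 0$ whenever $\xi=1$ (using $W^{*}\le W+1$), and on $\{J=2\}$ one has $Y^{*}-(Y+1)=X^{*}-X-1\le 0$ whenever $X^{*}\le X+1$. So I would take $E=\bigl(\{J=1\}\cap\{\xi=1\}\bigr)\cup\bigl(\{J=2\}\cap\{X^{*}\le X+1\}\bigr)$, choosing the coupling $(X,X^{*})$ to make $\{X^{*}\le X+1\}$ as large as possible (in the cases of interest the noise is simple enough — for instance Bernoulli — that this set is all of $\{J=2\}$). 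Since $E\subseteq\{Y^{*}\le Y+1\}$, for every $x$
\[
\mathbb{P}(Y^{*}\le Y+1\mid Y^{*}\ge x)\ \ge\ \frac{\mathbb{P}\bigl(E\cap\{Y^{*}\ge x\}\bigr)}{\mathbb{P}(Y^{*}\ge x)},
\]
and independence of $(J,\xi)$ from the rest yields
\[
\mathbb{P}\bigl(E\cap\{Y^{*}\ge x\}\bigr)=p\,\mathbb{P}(W^{*}+X\ge x)+\tfrac{\phi}{\mu}\,\mathbb{P}(\xi W+X^{*}\ge x),
\]
while $\mathbb{P}(Y^{*}\ge x)=\tfrac{q\nu}{\mu}\mathbb{P}(W^{*}+X\ge x)+\tfrac{\phi}{\mu}\mathbb{P}(\xi W+X^{*}\ge x)$.

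It remains to check $\mathbb{P}\bigl(E\cap\{Y^{*}\ge x\}\bigr)\ge p\,\mathbb{P}(Y^{*}\ge x)$. Subtracting, the difference equals
\[
p\,\mathbb{P}(W^{*}+X\ge x)\Bigl(1-\tfrac{q\nu}{\mu}\Bigr)+\tfrac{\phi}{\mu}\,\mathbb{P}(\xi W+X^{*}\ge x)(1-p),
\]
which is non-negative since $1-\tfrac{q\nu}{\mu}=\tfrac{\phi}{\mu}\ge 0$ and $p\le 1$; this is exactly the cancellation that makes the value $p=\frac{q^{2}\nu}{q\nu+\phi}$ drop out. The step I expect to be the main obstacle is the noise branch $\{J=2\}$: unlike the $W$-part there is no free stochastic slack there, so handling the \emph{conditional} (not merely unconditional) tail forces one to control $X^{*}$ on $\{J=2\}$ — the natural route being to couple $X$ with $X^{*}$ so that $X^{*}\le X+1$, i.e.\ to use that the noise meets the analogue of~\eqref{eq:ord}. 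If instead $X^{*}$ can overshoot $X+1$, its atom can push $Y^{*}$ above $Y+1$ on $\{J=2\}$, and that is precisely where the argument needs care.
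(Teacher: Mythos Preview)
Your construction of the size-biased coupling matches the paper's exactly (they call your mixing indicator $I$), and you have put your finger on the genuine difficulty. The paper's argument is shorter: it applies the elementary bound $\mathbb{P}(B)\ge\mathbb{P}(J=1)\,\mathbb{P}(B\mid J=1)$ under the conditional law $\mathbb{P}(\,\cdot\mid Y^{*}\ge x)$, first with $J=I$ and then with $J=\xi$, writing the prefactors as $q\nu/\mu$ and $q$. The $\xi$-step is sound, since $\xi$ is independent of the conditioning event $\{W^{*}+X\ge x\}$. The $I$-step, however, tacitly replaces $\mathbb{P}(I=1\mid Y^{*}\ge x)$ by the unconditional value $q\nu/\mu$; because $\{Y^{*}\ge x\}$ depends on $I$, this is legitimate only when $\mathbb{P}(W^{*}+X\ge x)\ge\mathbb{P}(\xi W+X^{*}\ge x)$, which is nowhere assumed. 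So the obstacle you flag on the noise branch is not handled by the paper --- it is passed over.

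Your caution is in fact decisive: without an extra hypothesis on $X$ the statement fails. Take $q=1$, $W\equiv 1$ (so $W^{*}\equiv 1\le W+1$), and $X\in\{0,M\}$ with $\mathbb{P}(X=M)=\epsilon$; then $\mu=1+M\epsilon$, $p=1/\mu$, and $X^{*}\equiv M$. The consequence $p\,\mathbb{P}(Y^{*}\ge x)\le\mathbb{P}(Y+1\ge x)$ of~\eqref{eq:ndcond} (this is~\eqref{eq:tails}, and depends only on marginals, hence on no coupling) fails at $x=1+M$ whenever $(1+M\epsilon)^{2}<1+M$, e.g.\ $M=10$, $\epsilon=1/10$. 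Thus no coupling of $(Y,Y^{*})$ can satisfy~\eqref{eq:ndcond} with the stated $p$. Your proposed remedy --- requiring the noise itself to satisfy $X^{*}\le_{st}X+1$ --- is exactly what makes your good-event computation go through as written, and it is vacuously true in the paper's only application of this theorem (the epidemic model, where $\phi=0$). Note, incidentally, that the companion positive-dependence result escapes this problem because there the conditioning event $\{Y+1-Z\ge x\}$ does not involve $I$.
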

\begin{proof}
Following, for example, Corollary 2.1 of \cite{cgs11}, we may construct $Y^*$ by replacing either $\xi W$ or $X$ by its size-biased version, with the term to replace chosen with probability proportional to its mean.  We obtain
\begin{equation}\label{eq:sb}
Y^*=I(W^*+X)+(1-I)(\xi W+X^*)\,,
\end{equation}
since $(\xi W)^*$ and $W^*$ are equal in law, where $X^*$ may be constructed independently of $(W,W^*)$, and $I$ is a Bernoulli random variable, independent of all else, with $\mathbb{P}(I=1)=\frac{q\nu}{q\nu+\phi}$.

For any event $B$ and indicator variable $J$, we know that  
\begin{equation} \label{eq:condres}
\pr(B) = \pr(J=0) \pr(B | J = 0)   +   \pr(J=1) \pr(B  | J =1) \geq \pr(J=1) \pr(B | J=1) . \end{equation}
Using this result to condition firstly on $I$ and then on $\xi$, we have
\begin{eqnarray*}
\mathbb{P}(Y^*\leq Y+1|Y^*\geq x)&\geq&\frac{q\nu}{q\nu+\phi}\mathbb{P}(W^*+X\leq Y+1|W^*+X\geq x)\\
&\geq&\frac{q^2\nu}{q\nu+\phi}\mathbb{P}(W^*+X\leq W+1+X|W^*+X\geq x)\\
&=&\frac{q^2\nu}{q\nu+\phi}\,,
\end{eqnarray*}
by our assumptions on $W$, hence \eqref{eq:ndcond} is satisfied with $p$ taking this value.
\end{proof}

Now, to demonstrate the application of Theorem \ref{thm:po}(ii) in this context, we will consider the random variable $Y=\xi W+X$, where
\begin{itemize}
\item there exists a random variable $Z$ (which may depend on $W$ and $W^*$) such that $W+1-Z\leq_{st}W^*$.  In the case where $W$ is a sum of positively related Bernoulli random variables, such a random variable $Z$ exists (see \cite{dlu12}).  Again, the reader is referred to \cite{barbour} for a wealth of applications involving such sums.  We assume we have constructed $(W,Z,W^*)$ in such a way that $W+1-Z\leq W^*$ almost surely.  This is possible under our stochastic ordering assumption.
\item X, $\xi$ and $\nu$ are as above.
\end{itemize}
\begin{theorem}\label{thm:podep}
With this choice of $Y$, we may apply Theorem \ref{thm:po}(ii) with $p=\frac{q^2\nu}{q\nu+\phi}$.
\end{theorem}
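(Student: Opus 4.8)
The plan is to follow the scheme of the proof of Theorem \ref{thm:negdep}, with the almost-sure bound $W^*\leq W+1$ used there replaced by the almost-sure coupling $W+1-Z\leq W^*$ available here, and with the random variable required by Theorem \ref{thm:po}(ii) taken to be exactly this $Z$. Following Corollary 2.1 of \cite{cgs11} as before, we size-bias one of the two independent summands $\xi W$ and $X$ of $Y$, choosing which one with probability proportional to its mean; since $(\xi W)^*$ and $W^*$ are equal in law and $X$ (hence $X^*$) may be taken independent of $(W,W^*)$, this produces
$$
Y^*=I(W^*+X)+(1-I)(\xi W+X^*)\,,
$$
where $I$ and $\xi$ are independent of all else, with $\mathbb{P}(I=1)=\frac{q\nu}{q\nu+\phi}$ and $\xi\sim\mbox{Be}(q)$, and the $W^*$ appearing in the first summand is the one jointly coupled to $(W,Z)$ so that $W+1-Z\leq W^*$ almost surely.

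The verification of \eqref{eq:pdcond} then hinges on the observation that on the event $\{I=1,\xi=1\}$ we have simultaneously $Y=W+X$ and $Y^*=W^*+X$, whence on that event
$$
Y^*=W^*+X\ \geq\ (W+1-Z)+X\ =\ Y+1-Z
$$
almost surely. Writing the conditional probability in \eqref{eq:pdcond} as a ratio, its numerator is bounded below by restricting to $\{I=1,\xi=1\}$, on which $Y+1-Z=W+X+1-Z$; using the mutual independence of $\{I=1\}$, $\{\xi=1\}$ and $(W,X,Z)$,
\begin{multline*}
\mathbb{P}\big(Y^*\geq Y+1-Z,\ Y+1-Z\geq x\big)\ \geq\ \mathbb{P}(I=1)\,\mathbb{P}(\xi=1)\,\mathbb{P}(W+X+1-Z\geq x)\\
=\ \frac{q^2\nu}{q\nu+\phi}\,\mathbb{P}(W+X+1-Z\geq x)\,.
\end{multline*}
For the denominator, $\xi W\leq W$ deterministically gives $Y+1-Z=\xi W+X+1-Z\leq W+X+1-Z$, so $\mathbb{P}(Y+1-Z\geq x)\leq\mathbb{P}(W+X+1-Z\geq x)$. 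Dividing, $\mathbb{P}(Y^*\geq Y+1-Z|Y+1-Z\geq x)\geq\frac{q^2\nu}{q\nu+\phi}$ for every $x$, i.e.\ \eqref{eq:pdcond} holds with $p=\frac{q^2\nu}{q\nu+\phi}$. (Equivalently, one may package this as two applications of \eqref{eq:condres}, conditioning first on $I$ and then on $\xi$, in exact parallel with the proof of Theorem \ref{thm:negdep}.)

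Only routine points then remain: that the $Z$ furnished by the positive-dependence hypothesis on $W$ (see \cite{dlu12}) is non-negative, integer-valued and integrable, so that Theorem \ref{thm:po}(ii) applies; and that the displayed $Y^*$ genuinely has the $Y$-size-biased law, which is where one uses the freedom in the size-bias construction of a sum of independent terms -- only the marginal law of the replaced summand is prescribed, so we are free to realise $(\xi W)^*$ as the particular $W^*$ coupled to $(W,Z)$. The step I would watch most carefully is the bookkeeping around the conditioning event: unlike in Theorem \ref{thm:po}(i), the conditioning in \eqref{eq:pdcond} is on $\{Y+1-Z\geq x\}$ rather than on $\{Y^*\geq x\}$, so one must apply the independence of $(I,\xi)$ to the right events and invoke the elementary domination $\xi W\leq W$ to control the denominator -- and it is precisely this that yields the factor $q^2$ rather than $q$.
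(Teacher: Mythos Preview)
Your proof is correct and follows essentially the same route as the paper's: both restrict to the event $\{I=1,\xi=1\}$, on which the almost-sure coupling $W^*\geq W+1-Z$ forces $Y^*\geq Y+1-Z$, and then extract the factor $q^2\nu/(q\nu+\phi)$. The paper compresses this into two applications of \eqref{eq:condres} (conditioning on $I$, then on $\xi$), exactly as you anticipate in your parenthetical remark; your explicit ratio argument, with the denominator controlled via $\xi W\leq W$, is the same content written out more transparently.
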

\begin{proof}
We still have the representation (\ref{eq:sb}) for $Y^*$.  Proceeding as before, using \eqref{eq:condres}, by conditioning firstly on $I$ and then on $\xi$, we have
\begin{eqnarray*}
&&\mathbb{P}(Y^*\geq Y+1-Z|Y+1-Z\geq x)\\
&&\qquad\geq\frac{q\nu}{q\nu+\phi}\mathbb{P}(W^*+X\geq Y+1-Z|Y+1-Z\geq x)\\
&&\qquad\geq\frac{q^2\nu}{q\nu+\phi}\mathbb{P}(W^*+X\geq W+1-Z+X|W+X+1-Z\geq x)\\
&&\qquad=\frac{q^2\nu}{q\nu+\phi}\,.
\end{eqnarray*}
Hence, (\ref{eq:pdcond}) is satisfied with $p$ taking this value.
\end{proof}

\section{Applications} \label{sec:applications}

\subsection{The Martin-L\"of epidemic model}

We show in this section how our framework implies  a Poisson approximation result for the number of survivors in an epidemic model which is based on the Martin-L\"of \cite{m86} model, but with the addition of an independent `catastrophe' which causes the entire population to become infected.  A Poisson approximation result was derived by Ball and Barbour \cite{bb90} for  the usual Martin-L\"of model, and we base our argument on theirs.

We begin by describing the random graph model used in the construction of the epidemic.  The random directed graph $G$ consists of $n$ vertices. Independently  for each vertex $1\leq i\leq n$, we choose a subset $L_i$ of vertices (distinct from $i$) to connect by a directed edge emanating from vertex $i$.  The value of $N_i=|L_i|$ is chosen from some given distribution, and then, conditional on $N_i=k$, the set $L_i$ is chosen uniformly at random from the $k$-subsets of vertices $j$ with $j\not=i$.

The Martin-L\"of epidemic is then constructed by choosing some initial set $I_0$ of infected vertices; the set of remaining vertices is $S_0$, the set of initial susceptibles.  The epidemic then proceeds in discrete time by recursively defining the set of infected vertices $I_j$ and susceptible vertices $S_j$ at time $j$ using the equations  
$$
I_j=\left(\bigcup_{i\in I_{j-1}}L_i\right)\cap S_{j-1}\,,\mbox{ and }S_j=S_{j-1}\setminus I_j\,,
$$     
for $j\geq1$.  The epidemic ends when $|I_j|=0$ for some $j$.

Ball and Barbour prove a Poisson approximation theorem, with an explicit rate, for $|S_\infty|$, the ultimate number of susceptible vertices remaining, in this model.  We will use their result to prove an analogous result in a modified version of this model.  

To avoid the notational burden associated with the most general version of this model, for most of this section we will concentrate on the Reed--Frost model, in which each $N_i$ has a binomial $\mbox{Bin}(n-1,r)$ distribution.  Following \cite{bb90}, we let $(|I_0|,|S_0|)=(1,n-1)$ and consider the choice $r=\frac{\psi\log(n)}{n-1}$ for some $\frac{1}{2}<\psi\leq1$.  We return to the more general model at the end of the section.

We will consider Poisson approximation for $\xi|S_\infty|$, where $\xi$ has a Bernoulli distribution with mean $q$, independent of all else.  The event $\{\xi=0\}$ represents a catastrophe in which the entire population is infected, which happens with (small) probability $1-q$, independent of the dynamics of the epidemic model.  By the triangle inequality, we then write
\begin{multline}\label{eq:bb}
d_{TV}(\mathcal{L}(\xi|S_\infty|),\mbox{Po}(n^{1-\psi}))\leq
d_{TV}(\mathcal{L}(\xi|S_\infty|),\mathcal{L}(\xi W))
+d_{TV}(\mathcal{L}(\xi W),\mbox{Po}(\Lambda))\\
+d_{TV}(\mbox{Po}(\Lambda),\mbox{Po}(n^{1-\psi}))\,,
\end{multline}
where $W$ is the number of isolated vertices (i.e., the number of vertices that cannot be reached from any other vertex) in the random directed graph $G$ described above, and
$$
\Lambda=\sum_{i=1}^n\prod_{j\not=i}\left(1-\frac{N_j}{n-1}\right)\,,
$$
so that $\mbox{Po}(\Lambda)$ is a mixed Poisson distribution. The first term on the right-hand side of (\ref{eq:bb}) is equal to 
$$
d_{TV}(\mathcal{L}(|S_\infty|),\mathcal{L}(W))=O(n^{1-2\psi}\log n)\,,
$$ 
by the argument leading to Corollary 2.5 of \cite{bb90}.  Similarly, the final term on the right-hand side of (\ref{eq:bb}) is $O(n^{1-2\psi}\log n)$.  We use our Poisson approximation results from above to bound the middle term on the right-hand side of (\ref{eq:bb}).

Consider first the case where the $N_i$ are fixed constants.  We note that $W$ may be written as a sum of negatively related Bernoulli random variables (see Theorem 1 of \cite{bb90}).  So, taking $\phi=0$ in our Theorem \ref{thm:negdep}, we may use our Theorem \ref{thm:po}(i) (with the choices $\lambda=\mathbb{E}W=\Lambda$ and $p=q$) to get that in this case
\begin{equation}\label{eq:bb2}
d_{TV}(\mathcal{L}(\xi W),\mbox{Po}(\Lambda)) \leq (1-q)(1+\Lambda)+\frac{q}{\Lambda}\left(\Lambda-\mbox{Var}(W)\right)\,.
\end{equation}
Taking expectations on the right-hand side of (\ref{eq:bb2}), in order to derive a bound in the case where the $N_i$ are random variables, we may follow the arguments leading to Corollary 2.5 of \cite{bb90} to obtain
$$
d_{TV}(\mathcal{L}(\xi W),\mbox{Po}(\Lambda)) \leq (1-q)(1+\mathbb{E}W)+O(n^{1-2\psi}\log n)=O((1-q)n^{1-\psi}+n^{1-2\psi}\log n)\,.
$$
Hence, from (\ref{eq:bb}) we have:
\begin{proposition}
$$
d_{TV}(\mathcal{L}(\xi|S_\infty|), \mbox{Po}(n^{1-\psi}))=O((1-q)n^{1-\psi}+n^{1-2\psi}\log n)\,.
$$
\end{proposition}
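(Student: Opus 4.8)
The plan is to assemble the bound directly from the triangle inequality already displayed in \eqref{eq:bb}, controlling each of the three terms on the right-hand side in turn and then collecting the resulting orders of magnitude.

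First I would record that the first and third terms on the right of \eqref{eq:bb} have already been bounded: the first equals $d_{TV}(\mathcal{L}(|S_\infty|),\mathcal{L}(W)) = O(n^{1-2\psi}\log n)$ by the argument behind Corollary 2.5 of \cite{bb90}, and the third, $d_{TV}(\mbox{Po}(\Lambda),\mbox{Po}(n^{1-\psi}))$, is likewise $O(n^{1-2\psi}\log n)$; both of these are inherited verbatim from Ball and Barbour's analysis (using $\mathbb{E}\Lambda = n^{1-\psi}$ and their variance estimates for $\Lambda$). So the only term requiring fresh input is the middle one, $d_{TV}(\mathcal{L}(\xi W),\mbox{Po}(\Lambda))$.

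For the middle term I would first treat the case of deterministic degrees $N_i$. Here $W$, the number of isolated vertices of $G$, is a sum of negatively related Bernoulli variables by Theorem 1 of \cite{bb90}, so $W$ satisfies \eqref{eq:ord}; taking $\phi = 0$ (no noise $X$) in Theorem \ref{thm:negdep} gives $p = q$, and applying Theorem \ref{thm:po}(i) with $\lambda = \mathbb{E}W = \Lambda$ yields the explicit bound \eqref{eq:bb2}. For random $N_i$ I would condition on the $N_i$, apply \eqref{eq:bb2} conditionally, and then take expectations; since $\mathbb{E}[\mathbb{E}W \mid (N_i)] = \Lambda$ the expectation of the term $\frac{q}{\Lambda}(\Lambda - \mathrm{Var}(W))$ is controlled by the same second-moment computations for $\Lambda$ that appear in \cite{bb90}, contributing $O(n^{1-2\psi}\log n)$, while $\mathbb{E}[(1-q)(1+\mathbb{E}W)] = (1-q)(1 + n^{1-\psi})$. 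This gives $d_{TV}(\mathcal{L}(\xi W),\mbox{Po}(\Lambda)) = O((1-q)n^{1-\psi} + n^{1-2\psi}\log n)$.

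Finally I would add the three estimates: the $(1-q)n^{1-\psi}$ term comes only from the middle piece, and all three pieces contribute an $O(n^{1-2\psi}\log n)$ error, so their sum is $O((1-q)n^{1-\psi} + n^{1-2\psi}\log n)$, as claimed. The main obstacle is purely bookkeeping from \cite{bb90}: one must verify that taking expectations over the random $N_i$ in \eqref{eq:bb2} really does reproduce the $O(n^{1-2\psi}\log n)$ error rate, i.e. that $\mathbb{E}|\Lambda - \mathrm{Var}(W)|$ (equivalently the relevant covariance sums for the isolated-vertex indicators) is of that order — but this is exactly the content of the computations leading to Corollary 2.5 of \cite{bb90}, which we are entitled to invoke, so no genuinely new estimate is needed. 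A small point to check is that the stated choice $r = \frac{\psi\log n}{n-1}$ with $\frac12 < \psi \le 1$ indeed makes $n^{1-2\psi}\log n = o(1)$ and makes all these error terms genuinely negligible relative to the leading behaviour.
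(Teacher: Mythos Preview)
Your proposal is correct and follows essentially the same approach as the paper: the decomposition via \eqref{eq:bb}, the treatment of the first and third terms by invoking \cite{bb90}, and the handling of the middle term by first fixing the $N_i$, applying Theorem~\ref{thm:negdep} with $\phi=0$ and Theorem~\ref{thm:po}(i) with $\lambda=\Lambda$, $p=q$ to obtain \eqref{eq:bb2}, then averaging over the $N_i$ and appealing to the estimates behind Corollary~2.5 of \cite{bb90}, all match the paper's argument exactly. The only minor slip is notational: you write $\mathbb{E}[\mathbb{E}W\mid(N_i)]=\Lambda$ where you mean $\mathbb{E}[W\mid(N_i)]=\Lambda$, and $\mathbb{E}\Lambda$ is only asymptotically $n^{1-\psi}$ rather than exactly so, but neither point affects the argument.
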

In particular, if $1-q=O(n^{-\psi}\log n)$, we obtain the order $O(n^{1-2\psi}\log n)$, the same order as in Corollary 2.5 of \cite{bb90} for the usual Reed--Frost model.  That is, if the probability $1-q$ of catastrophe is small enough, it does not affect the order of the Poisson approximation bound obtained for the ultimate number of susceptible vertices remaining. 
\begin{remark}
\emph{We can also use the arguments leading to Theorem 3 of \cite{bb90} to give a bound in the more general Martin--L\"of epidemic model discussed above.  Under the assumptions of Theorem 3 of \cite{bb90}, and with $\eta_n$ defined therein, we obtain
$$
d_{TV}(\mathcal{L}(\xi|S_\infty|),\mbox{Po}(\mu^{(n)}))=O((1-q)\mu^{(n)}+\eta_n)\,,
$$
where $\mu^{(n)}=\mathbb{E}\Lambda$.}
\end{remark}

\subsection{Extremes of associated random variables}\label{sec:extremes}

We recall the following definition of association, introduced by \cite{epw67}.
\begin{definition}
The random variables $X_1,X_2,\ldots,X_n$ are associated if
\begin{equation}\label{eq:assdef}
\mathbb{E}[f(X_i,1\leq i\leq n)g(X_i,1\leq i\leq n)]\geq \mathbb{E}[f(X_i,1\leq i\leq n)]\mathbb{E}[g(X_i,1\leq i\leq n)]\,,
\end{equation}
for all increasing functions $f$ and $g$. 
\end{definition}
Association is a notion of positive dependence that we will return to in Section \ref{sec:assoc}.  In this section, our interest is in Poisson approximation for sums of associated random variables.

Suppose that $X_1,\ldots,X_n$ are (for simplicity) identically distributed, associated random variables and define
$W=\sum_{i=1}^n\II(X_i>z)$ for some $z$, so that $W$ counts the number of the $X_i$ that exceed the threshold $z$,
and write $\lambda = n\mathbb{P}(X_1>z) = \ep W$.  Poisson approximation in this situation is considered in Section 8.3 of \cite{barbour}, using the observation that $W$ is a sum of positively related Bernoulli random variables to obtain the bound
\begin{equation}\label{eq:ext1}
d_{TV}(\mathcal{L}(W),\mbox{Po}(\lambda))\leq\left(1-e^{-\lambda}\right)\left(\frac{\mbox{Var}(W)}{\lambda}-1+
\frac{2 \lambda}{n} \right)\,.
\end{equation}
We consider now the effect of some (independent) `contamination' of our sequence on this Poisson approximation result.

Suppose that $X_1,\ldots,X_n$ are identically distributed, as before.  Furthermore, let
\begin{itemize}
\item $X_1,\ldots,X_m$ be associated, for some $m\leq n$, and
\item $X_{m+1},\ldots,X_n$ be independent of $\{X_1,\ldots,X_m\}$, with arbitrary dependence among these $n-m$ random variables.
\end{itemize}
Let $W=\sum_{i=1}^m\II(X_i>z)$, $X=\sum_{i=m+1}^n\II(X_i>z)$, and $Y=W+X$.  Note that the expected number of  random variables $X_i$ exceeding the threshold $z$ is the same as before.  In light of Theorem \ref{thm:podep}, we apply Theorem \ref{thm:po}(ii) with the choices $\lambda=\mathbb{E}Y$, $p=m/n$, and $Z$ such that $\mathbb{E}Z=\mathbb{P}(X_i>z)$ (as in the analysis in \cite{barbour}) to obtain
\begin{proposition}
\begin{equation} d_{TV}(\mathcal{L}(Y),\mbox{Po}(\lambda))
\leq\left(1-e^{-\lambda}\right)\left(\frac{\mbox{Var}(Y)}{\lambda}-1+\frac{2(n-m)}{n}(\lambda+1)+\frac{2m \lambda}{n^2} \right)\,. \label{eq:ext2}
\end{equation}
\end{proposition}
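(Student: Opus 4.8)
The plan is to exhibit $Y=W+X$ as an instance of the noise-contaminated construction preceding Theorem~\ref{thm:podep}, and then to read the bound off Theorem~\ref{thm:po}(ii). Take $W=\sum_{i=1}^m\II(X_i>z)$, $X=\sum_{i=m+1}^n\II(X_i>z)$, and let the Bernoulli mixing variable be $\xi\equiv 1$ (that is, $q=1$), so that $Y=\xi W+X$ in the notation of that construction. The assumed independence of $\{X_1,\dots,X_m\}$ from $\{X_{m+1},\dots,X_n\}$ is exactly the independence of $X$ from $(W,W^*)$ required there; the arbitrary dependence among $X_{m+1},\dots,X_n$ is irrelevant, since $X$ enters only through its mean $\phi=\mathbb{E}X=(n-m)\mathbb{P}(X_1>z)$ and through a size-biased version that may be built independently of $(W,W^*)$.

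First I would verify the hypothesis on $W$. Since $X_1,\dots,X_m$ are associated and $x\mapsto\II(x>z)$ is increasing, the indicators $\II(X_i>z)$, $1\le i\le m$, are again associated, hence positively related (the analogue of \eqref{eq:nr} with the inequality reversed). By \cite{dlu12} this furnishes a coupling $(W,Z,W^*)$ with $W+1-Z\le W^*$ almost surely, and the standard Stein--Chen coupling for positively related Bernoullis --- the one underlying the bound \eqref{eq:ext1} in Section~8.3 of \cite{barbour} --- has $\mathbb{E}Z=\mathbb{P}(X_1>z)$. With $\nu=\mathbb{E}W=m\mathbb{P}(X_1>z)$, Theorem~\ref{thm:podep} then applies, so \eqref{eq:pdcond} holds with $p=\frac{q^2\nu}{q\nu+\phi}=\frac{\nu}{\nu+\phi}=\frac{m}{n}$.

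It then remains to substitute into the bound of Theorem~\ref{thm:po}(ii) with $\lambda=\mu=\mathbb{E}Y$. Then $|\mu-\lambda|/\lambda=0$ and $\frac{\sigma^2}{\mu}+\mu=\frac{\mbox{Var}(Y)}{\lambda}+\lambda$; moreover $\mathbb{E}Y=\mathbb{E}W+\mathbb{E}X=n\mathbb{P}(X_1>z)=\lambda$, so $\mathbb{P}(X_1>z)=\lambda/n$, whence $2p\,\mathbb{E}Z=\frac{2m\lambda}{n^2}$ and $(1-2p)(\mu+1)=\bigl(1-\tfrac{2m}{n}\bigr)(\lambda+1)$. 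Collecting the three contributions and using $\lambda+\bigl(1-\tfrac{2m}{n}\bigr)(\lambda+1)=-1+\tfrac{2(n-m)}{n}(\lambda+1)$ reproduces \eqref{eq:ext2} exactly.

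The only step needing genuine care is the identification $\mathbb{E}Z=\mathbb{P}(X_1>z)$: one must pin down precisely which coupling $(W,Z,W^*)$ is used (the standard construction from \cite{barbour}, Section~8.3), check that this mean is as claimed, and check that it is compatible with the construction in the statement of Theorem~\ref{thm:podep}. Everything else is either an immediate appeal to Theorem~\ref{thm:podep} or elementary arithmetic, so I do not anticipate further difficulty; I would also remark in passing that, as with \eqref{eq:ext1}, the resulting bound holds for whatever dependence is present among $X_{m+1},\dots,X_n$.
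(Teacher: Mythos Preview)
Your proposal is correct and follows exactly the approach the paper takes: it too invokes Theorem~\ref{thm:podep} (with $q=1$, so $p=m/n$) and then substitutes $\lambda=\mu$, $\mathbb{E}Z=\mathbb{P}(X_1>z)=\lambda/n$ into Theorem~\ref{thm:po}(ii), citing \cite{barbour} for the value of $\mathbb{E}Z$. Your write-up is more detailed than the paper's---you spell out the arithmetic $\lambda+(1-\tfrac{2m}{n})(\lambda+1)=-1+\tfrac{2(n-m)}{n}(\lambda+1)$ and flag the need to check compatibility of the $(W,Z,W^*)$ coupling with the construction in Theorem~\ref{thm:podep}---but there is no substantive difference in method.
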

We now compare the bounds (\ref{eq:ext1}) and (\ref{eq:ext2}) in a concrete example.
\begin{example}
\emph{Let $U_0,U_1,\ldots$ have independent uniform $\mbox{U}(0,1)$ distributions, and define $X_i=U_i+U_{i-1}$, $1\leq i\leq n$.  These random variables $X_i$ are associated.  In their Example 8.3.2, \cite{barbour} shows that $\lambda=n\mathbb{P}(U_i+U_{i-1}>2-\sqrt{2\lambda/n})$, if $n\geq2\lambda$. We hence follow \cite{barbour} and choose $z=2-\sqrt{2\lambda/n}$. Example 8.3.2 of \cite{barbour} also shows that
\begin{equation}\label{eq:unifeg}
\frac{\mbox{Var}(W)}{\lambda}-1<\frac{4}{3}\sqrt{\frac{2\lambda}{n}}\,,
\end{equation}
and so the bound (\ref{eq:ext1}), for the original model without contamination, becomes
\begin{equation}\label{eq:unif1}
d_{TV}(\mathcal{L}(W),\mbox{Po}(\lambda))\leq(1-e^{-\lambda})\left(\frac{4}{3}\sqrt{\frac{2\lambda}{n}}+\frac{2\lambda}{n}\right)\,.
\end{equation}
Now consider the `contaminated' model, with $X_i$ as above for $i=1,\ldots,m$.  We let $X_{m+1},\ldots,X_n$ be independent of $X_1,\ldots,X_m$, but each with the same marginal distribution as $X_1$.  Writing 
\begin{multline*}
\frac{\mbox{Var}(Y)}{\lambda}-1=\frac{\mathbb{E}W}{\lambda}\left(\frac{\mbox{Var}(W)}{\mathbb{E}W}-1\right)+\frac{\mathbb{E}X}{\lambda}\left(\frac{\mbox{Var}(X)}{\mathbb{E}X}-1\right)\\
<\frac{4}{3}\mathbb{E}W\sqrt{\frac{2}{m\lambda}}+\frac{n-m}{n}\left(\frac{\mbox{Var}(X)}{\mathbb{E}X}-1\right)\,,
\end{multline*}
where we used the inequality (\ref{eq:unifeg}), the bound (\ref{eq:ext2}) becomes
\begin{multline}\label{eq:unif2}
d_{TV}(\mathcal{L}(Y),\mbox{Po}(\lambda))\\
\leq(1-e^{-\lambda})\left(\frac{4}{3}\mathbb{E}W\sqrt{\frac{2}{m\lambda}}+\frac{n-m}{n}\left(\frac{\mbox{Var}(X)}{\mathbb{E}X}-1\right)+\frac{2(n-m)}{n}(\lambda+1)+\frac{2m\lambda}{n^2}\right)\,.
\end{multline}
Consider the case where $\lambda$ is fixed and $n\rightarrow\infty$.  In this case, the upper bound of (\ref{eq:unif1}) is of order $O(n^{-1/2})$.  If $n-m=O(\sqrt{n})$ and $(\mathbb{E}X)^{-1}\mbox{Var}(X)-1=O(n^{-1/2})$, then the upper bound (\ref{eq:unif2}) for the model with contamination is of this same order.
}
\end{example}

\section{Association and negative association}\label{sec:assoc}

We now turn our attention to the application of Theorem \ref{thm:po} to derive Poisson approximation results for sums of associated or negatively associated random variables.  We recall the definition (\ref{eq:assdef}) of association, and the following definition of negative association, introduced by \cite{jp83}.  
\begin{definition}
The random variables $X_1,\ldots,X_n$ are said to be negatively associated if
$$
\mathbb{E}[f(X_i,i\in\Gamma_1)g(X_i,i\in\Gamma_2)]\leq\mathbb{E}[f(X_i,i\in\Gamma_1)]\mathbb{E}[g(X_i,i\in\Gamma_2)]\,,
$$
for all non-decreasing functions $f$ and $g$, and all $\Gamma_1,\Gamma_2\subseteq\{1,\ldots,n\}$ such that $\Gamma_1\cap\Gamma_2=\emptyset$.  
\end{definition}
We also refer the reader to  \cite{bk00}, \cite{d13} and references therein for further discussion of the association and negative association properties, their applications, and some approximation results for sums of associated or negatively associated random variables.

We consider firstly Poisson approximation results for $Y=X_1+\cdots+X_n$, where $X_1,\ldots,X_n$ are negatively associated, non-negative integer-valued random variables.  For each $i\in\{1,\ldots,n\}$, we choose $J(i)\subseteq\{1,\ldots,n\}\setminus\{i\}$ and define $Z_i=\sum_{j\in J(i)}X_j$.  In the setting of compound Poisson approximation considered by \cite{d13}, for example, these sets $J(i)$ represent a `neighbourhood of dependence' of $X_i$, containing those indices $j$ such that $X_j$ is strongly dependent (in some sense) on $X_i$.  Here, however, we are free to make any choice of these sets $J(i)$.  In the examples we consider below, we will choose $J(i)=\emptyset$ for each $i$, for simplicity, though our Poisson approximation results apply with an arbitrary choice of these sets.

Given these sets $J(i)$ for $i=1,\ldots,n$, we define
$$
\theta_j=\frac{1}{j}\sum_{i=1}^n\mathbb{E}\left[X_i\II(X_i+Z_i=j)\right]\,,\qquad j\geq1\,,
$$
and $\theta=\sum_{j=1}^\infty\theta_j$.  Letting $\Theta$ be a random variable, independent of all else, with $\mathbb{P}(\Theta=j)=\theta_j/\theta$ for $j\geq1$, Lemma 3.1 of \cite{d13} shows that $Y^*\leq_{st}Y+\Theta^*$.  So, since there exists a coupling such that $Y^*\leq Y+\Theta^*$ almost surely, and noting that $\Theta^*\geq1$ almost surely, we may apply Theorem \ref{thm:po}(i) with the choice
$$
p=\mathbb{P}(\Theta^*=1)=\frac{\theta_1}{\sum_{j=1}^\infty j\theta_j}\,,
$$
by the definition of the size-bias distribution.  We emphasise again that this applies for any choice of the sets $J(i)$, $i=1,\ldots,n$.  For simplicity, we state explicitly in Corollary \ref{cor:negass} below the bound we obtain from Theorem \ref{thm:po}(i) in this setting with the choices $\lambda=\mu$ and $J(i)=\emptyset$ for each $i$, in which case $\theta_j=\sum_{i=1}^n\mathbb{P}(X_i=j)$ for each $j\geq1$.
\begin{corollary}\label{cor:negass}
Let $Y=X_1+\cdots+X_n$, where $X_1,\ldots,X_n$ are negatively associated, non-negative integer-valued random variables, with $\mu=\mathbb{E}Y>0$.  Let $p=\mu^{-1}\sum_{i=1}^n\mathbb{P}(X_i=1)$.  Then
$$
d_{TV}(\mathcal{L}(Y),\mbox{Po}(\mu))\leq(1-e^{-\mu})\left\{1+\mu+(1-2p)\left(\frac{\sigma^2}{\mu}+\mu\right)\right\}\,,
$$ 
where $\sigma^2=\mbox{Var}(Y)$.
\end{corollary}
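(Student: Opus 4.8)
The plan is to obtain Corollary~\ref{cor:negass} as the specialisation, to $\lambda=\mu$ and $J(i)=\emptyset$ for every $i$, of the construction described in the paragraph preceding the statement. The first step is to record what the data of that construction become under $J(i)=\emptyset$: then $Z_i\equiv0$, so for $j\ge1$ we have $\theta_j=\frac1j\sum_{i=1}^n\mathbb{E}[X_i\II(X_i=j)]=\sum_{i=1}^n\mathbb{P}(X_i=j)$, and hence $\sum_{j\ge1}j\theta_j=\sum_{i=1}^n\mathbb{E}X_i=\mu$. Consequently, by the definition of the size-biased distribution, $\mathbb{P}(\Theta^*=1)=\theta_1/\sum_{j\ge1}j\theta_j=\mu^{-1}\sum_{i=1}^n\mathbb{P}(X_i=1)$, which is precisely the $p$ appearing in the statement.

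The second step is to verify that hypothesis \eqref{eq:ndcond} of Theorem~\ref{thm:po}(i) holds with this $p$. Lemma~3.1 of \cite{d13} gives $Y^*\le_{st}Y+\Theta^*$ with $\Theta^*$ the independent $\Theta$-size-biased variable, and since $\Theta^*\ge1$ almost surely there is a coupling realising $Y^*\le Y+\Theta^*$ almost surely; on $\{\Theta^*=1\}$ this forces $Y^*\le Y+1$. Combined with the independence of $\{\Theta^*=1\}$ from $Y^*$ in that coupling, one obtains, for every $x$, $\mathbb{P}(Y^*\le Y+1\mid Y^*\ge x)\ge\mathbb{P}(\Theta^*=1\mid Y^*\ge x)=\mathbb{P}(\Theta^*=1)=p$, which is \eqref{eq:ndcond}. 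I expect this to be the only delicate point: turning the stochastic-domination output $Y^*\le_{st}Y+\Theta^*$ into the conditional inequality \eqref{eq:ndcond} is exactly where the explicit ``clump'' structure behind Lemma~3.1 of \cite{d13}, rather than bare stochastic domination, is used.

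The final step is to apply Theorem~\ref{thm:po}(i) with $\lambda=\mu$ and simplify. Since $p\in(0,1]$ we have $|\mu-p\mu|/\mu=1-p$, so the coefficient $\frac{|\mu-p\lambda|}{\lambda}-p$ appearing in \eqref{eq:negdepbd} equals $1-2p$; and $\mathbb{E}Y^*=\mathbb{E}Y^2/\mathbb{E}Y=\sigma^2/\mu+\mu$. Substituting these into \eqref{eq:negdepbd} yields $(1-e^{-\mu})\{1+\mu+(1-2p)(\sigma^2/\mu+\mu)\}$, the asserted bound. Everything past the verification of \eqref{eq:ndcond} is routine arithmetic, so the genuine work is concentrated in the second step.
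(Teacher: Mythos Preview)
Your proposal is correct and follows exactly the route the paper takes in the paragraph preceding the corollary: specialise the construction to $J(i)=\emptyset$ (so $\theta_j=\sum_i\mathbb{P}(X_i=j)$ and $p=\mathbb{P}(\Theta^*=1)=\mu^{-1}\sum_i\mathbb{P}(X_i=1)$), invoke Lemma~3.1 of \cite{d13} for $Y^*\le_{st}Y+\Theta^*$, and then apply Theorem~\ref{thm:po}(i) with $\lambda=\mu$. Your second step is in fact more explicit than the paper's own argument, which simply asserts that Theorem~\ref{thm:po}(i) applies with this $p$; you correctly flag that passing from the stochastic ordering to \eqref{eq:ndcond} needs the explicit coupling underlying \cite{d13} rather than bare stochastic domination.
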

\begin{remark}
\emph{In the setting of Corollary \ref{cor:negass}, if the $X_i$ are Bernoulli random variables then we have $p=1$.  Since negatively associated Bernoulli random variables are known to be negatively related (see page 78 of \cite{e05}, for example), we know that $Y^*\leq_{st}Y+1$ in this setting, and so the results of \cite{dlu12} may be applied.  In this case, our Corollary \ref{cor:negass} gives the same bound as \cite{dlu12}.}
\end{remark}
Given the above remark, we may think of $p$ as measuring (in a certain sense) how close the $X_i$ are to having Bernoulli distributions, with an increasing $p$ resulting in a smaller upper bound in Poisson approximation.  We illustrate this with a simple example.
\begin{example}
\emph{In the setting of Corollary \ref{cor:negass}, if $X_1,\ldots,X_n$ are identically distributed with $\mathbb{P}(X_1=0)=1-a$, $\mathbb{P}(X_1=1)=a-\epsilon$ and $\mathbb{P}(X_1=2)=\epsilon$ for some $a\geq\epsilon\geq0$ with $a+\epsilon<1$, then we have $p=\frac{a-\epsilon}{a+\epsilon}$, and obtain a good Poisson approximation bound when $\epsilon$ is small.}
\end{example}

We have so far discussed only the approximation of sums of negatively associated random variables.  We now turn our attention to sums of associated random variables, where we use similar techniques to the above.  We let $X_1,\ldots,X_n$ be associated, non-negative integer-valued random variables, and define $\theta_j$ (for $j\geq1$) as above, again with any choice of the sets $J(i)\subseteq\{1,\ldots,n\}\setminus\{i\}$ allowed for each $i$.  We also let $\Theta$ be as above, again independent of all else, and write $Y=X_1+\cdots+X_n$.

We further define the random variable $V$, independent of all else, with $\mathbb{P}(V=i)=\mathbb{E}X_i/\mathbb{E}Y$ for $i=1,\ldots,n$.  Write $Z=X_V+Z_V$.  By Lemma 3.2 of \cite{d13}, we have that $Y^*\geq_{st}Y+\Theta^*-Z$, and so, analogously to the case of negative association, we may apply Theorem \ref{thm:po}(ii) with this choice of $Z$ and with $p=\theta_1\left(\sum_{j=1}^\infty j\theta_j\right)^{-1}$.  An analogue of Corollary \ref{cor:negass} thus also applies in this setting.

As before, if the $X_i$ are Bernoulli random variables then we have $p=1$.  Since associated Bernoulli random variables are positively related (see, for example, page 77 of \cite{e05}), the results of \cite{dlu12} may also be applied here, and we obtain the same bound as \cite{dlu12} in this special case.  Again, we are not limited to considering Bernoulli random variables, and may use the value of $p$ to measure how close the $X_i$ are to being Bernoulli. 

\subsection{Application to simple random sampling}

Let $c_1,\ldots,c_n$ be $n$ (not necessarily distinct) non-negative integers.  Suppose we take a random sample of size $m<n$ without replacement from this collection of numbers, and let $X_1,\ldots,X_m$ denote this sample.  The random variables $X_1,\ldots,X_m$ are negatively related (see Section 3.2 of \cite{jp83}), and we will consider Poisson approximation of $Y=X_1+\cdots+X_m$ using Corollary \ref{cor:negass}.  Straightforward calculations give $\mu=(m/n)\sum_{i=1}^nc_i$ and
$$
\sigma^2=\frac{m}{n}\sum_{i=1}^nc_i^2+\frac{m(m-1)}{n(n-1)}\sum_{i=1}^n\sum_{j\not=i}c_ic_j-\frac{m^2}{n^2}\left(\sum_{i=1}^nc_i\right)^2\,.
$$
Noting that, by exchangeability, $\mathbb{P}(X_i=1)=n^{-1}|\{i:c_i=1\}|$ for each $i$, we may take
$$
p=\frac{|\{i:c_i=1\}|}{\sum_{i=1}^nc_i}\,,
$$
and apply Corollary \ref{cor:negass} with these choices.

In the case where each $c_i$ is either 0 or 1, $Y$ has a hypergeometric distribution, for which good Poisson approximation bound are well-known; see, for example, Theorem 6.A of \cite{barbour} for an upper bound obtained using the Stein--Chen method.  This bound is obtained by writing $Y$ as a sum of negatively related Bernoulli random variables.  Note that our result generalises this bound: in the case where each $c_i$ is either 0 or 1, we may take $p=1$, and we recover the upper bound given by \cite{barbour}. 

\section{A Poincar\'e inequality}\label{sec:poin}

Next, we show how the  assumptions of Section \ref{sec:pois} may be employed to prove a Poincar\'e inequality, relaxing strict monotonicity assumptions.

\begin{definition}
Define the discrete Poincar\'e constant $R_W$ for a (non-negative, integer-valued) random variable $W$ by
$$
R_W=\sup_{g\in\mathcal{G}(W)}\left\{\frac{\mathbb{E}[g(W)^2]}{\mathbb{E}[\Delta g(W)^2]}\right\}\,,
$$
where the supremum is taken over the set
$$
\mathcal{G}(W)=\{g:\mathbb{Z}^+\mapsto\mathbb{R}\mbox{ with }\mathbb{E}[g(W)^2]<\infty\mbox{ and }\mathbb{E}[g(W)]=0\}\,.
$$
\end{definition}
We note the well-known lower bound
\begin{equation}\label{eq:plb}
R_W\geq\mbox{Var}(W)\,,
\end{equation}
obtained by choosing $g(x)=x-\mathbb{E}W$.

Theorem 1.1 of \cite{dj13} proves that if $W$ satisfies (\ref{eq:ord}), then $R_W\leq\mathbb{E}W$.  In Theorem \ref{thm:poincare} below, we weaken this condition, assuming only the conditions of Theorem \ref{thm:po}(i), to prove an analogous result.  The bound can be expressed in terms of the failure rate of $Y$, defined below.

\begin{definition} For a discrete random variable $Y$, define the failure (or hazard) rate
$$
h_Y(j)=\frac{\mathbb{P}(Y=j)}{\mathbb{P}(Y\geq j)}\,,
$$
and write $h^*_Y = \inf_{j} h_Y(j)$, where the infimum is taken over the support of $Y$. 
\end{definition}
Throughout this section, we let $I_p\sim\mbox{Be}(p)$ have a Bernoulli distribution with mean $p$, independent of all else.
\begin{theorem}\label{thm:poincare}
Let $Y$ be a non-negative, integer-valued random variable with $\mathbb{E}Y=\mu>0$.  Let $p\in(0,1]$ be such that $I_pY^*\leq_{st}Y+1$, then
$$
R_Y \leq \mu \left( 1 + \frac{1-p}{p h^*_Y} \right)\,.
$$
\end{theorem}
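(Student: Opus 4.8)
The plan is to follow the same strategy used in Daly--Johnson \cite{dj13} for the case $p=1$, namely to bound $\mathbb{E}[g(Y)^2]$ in terms of $\mathbb{E}[\Delta g(Y)^2]$ by passing through the size-biased variable $Y^*$. The starting point is the standard identity relating $\mathbb{E}[g(Y)^2]$ to an expression involving $Y^*$: for any $g$ with $\mathbb{E}[g(Y)]=0$, writing $f$ for a function with $\Delta f = g$ (say $f(j)=\sum_{k<j}g(k)$), one has $\mathbb{E}[g(Y)^2] = \mathbb{E}[\Delta f(Y)\, g(Y)]$, and a telescoping/Abel summation argument together with the definition \eqref{eq:sizebias} of $Y^*$ converts this into a comparison between $\mathbb{E}[(\ldots)g(Y)]$ and $\mu\,\mathbb{E}[(\ldots)g(Y^*)]$-type terms. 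Concretely, I expect to reach a bound of the shape $\mathbb{E}[g(Y)^2] \le \mu\,\mathbb{E}[\Delta g(Y^*)\,\cdot(\text{something})] + \mu\,\mathbb{E}[g(Y)^2]$-type terms that must be rearranged; the key analytic input from \cite{dj13} is that under $Y^*\le_{st} Y+1$ one can control the cross term.

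Second, I would handle the relaxation from $Y^*\le_{st}Y+1$ to $I_pY^*\le_{st}Y+1$ exactly as in the proof of Theorem \ref{thm:po}(i): the stochastic ordering $I_pY^*\le_{st}Y+1$ is equivalent (via the computation in \eqref{eq:tails}) to $\overline{P}_{Y^*}(x) \le \tfrac1p\,\overline{P}_{Y+1}(x)$ for all $x$. So wherever the $p=1$ argument uses a term $\mu\,\mathbb{E}[\psi(Y^*)]$ for a non-negative increasing $\psi$, I can bound it by $\tfrac1p$ times the corresponding $\mathbb{E}[\psi(Y+1)]$ term, at the cost of a factor $1/p$ and an additive discrepancy. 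The role of the failure rate $h_Y^*$ is to convert this tail comparison into a pointwise-mass comparison: since $\mathbb{P}(Y=j) = h_Y(j)\,\overline{P}_Y(j) \ge h_Y^*\,\overline{P}_Y(j)$, a bound on $\overline{P}_{Y^*}$ in terms of $\overline{P}_{Y+1}$ yields one on the defect $\overline{P}_Y - p\,\overline{P}_{Y^*}(\cdot+1)$, controlled by a multiple of $\overline{P}_Y$ and hence, via $h_Y^*$, by the point masses of $Y$; this is what produces the $\frac{1-p}{p h_Y^*}$ correction term. I would aim to assemble the final inequality in the form $\mathbb{E}[g(Y)^2] \le \mu\big(1 + \tfrac{1-p}{p h_Y^*}\big)\mathbb{E}[\Delta g(Y)^2]$ by a Cauchy--Schwarz step turning the linear-in-$g$ remainder terms into $\mathbb{E}[\Delta g(Y)^2]$, absorbing constants.

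The main obstacle I anticipate is the bookkeeping in the Cauchy--Schwarz / rearrangement step: the $p=1$ proof in \cite{dj13} is presumably tight precisely because the cross term telescopes cleanly, and inserting the $1/p$ factor introduces an extra non-negative term $(1-p)\,\mu\,\overline{P}_{Y^*}$-flavoured contribution that does not obviously telescope. I would need to show this extra term is dominated by $\frac{1-p}{p}\cdot\frac{\mu}{h_Y^*}\,\mathbb{E}[\Delta g(Y)^2]$ rather than something worse; the failure-rate bound $h_Y(j)\ge h_Y^*$ is the tool, but verifying that the indices line up (the defect at level $k$ being charged against $\mathbb{P}(Y=k)$ and then against $\Delta g(k-1)^2$ or $\Delta g(k)^2$) is where the care is needed. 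A secondary subtlety is that $Y$ must have $h_Y^*>0$ for the bound to be non-vacuous, i.e. $Y$ should have at worst geometric-type tails; I would note this implicitly (the bound is trivially true, reading $+\infty$, otherwise). Apart from that, the skeleton should transfer from \cite{dj13} with the substitutions described, and I expect the argument to be short once the defect $\overline{P}_Y(k) - p\,\overline{P}_{Y^*}(k+1) \ge 0$ is invoked and its total mass $\sum_k(\overline{P}_Y(k) - p\,\overline{P}_{Y^*}(k+1)) = 1 + (1-p)\mu$ is used to pin down the constant.
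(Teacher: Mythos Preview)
Your overall plan---follow \cite{dj13}, use the tail inequality $\overline{P}_{Y^*}(x)\le p^{-1}\overline{P}_{Y+1}(x)$, and invoke the failure rate to pass from tails to point masses---is correct in spirit, but you are missing the technical device that makes the argument short. The paper does not use an Abel-summation/Cauchy--Schwarz route at all. It quotes Lemma~5.2 of \cite{dj13}, which (via Klaassen's kernel $\chi$) gives directly
\[
\mathbb{E}[g(Y)^2]\;\le\;\mu\sum_{j\ge0}\Delta g(j)^2\bigl[\mathbb{E}\chi(Y^*,j)-\mathbb{E}\chi(Y,j)\bigr],
\]
already quadratic in $\Delta g$. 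No Cauchy--Schwarz step is needed, so the bookkeeping obstacle you anticipate never arises.

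The decomposition is then simply to insert $I_pY^*$: write the bracket as $[\mathbb{E}\chi(Y^*,j)-\mathbb{E}\chi(I_pY^*,j)]+[\mathbb{E}\chi(I_pY^*,j)-\mathbb{E}\chi(Y,j)]$. The second piece is handled verbatim as in the $p=1$ argument of \cite{dj13} (since $I_pY^*\le_{st}Y+1$), contributing $\mu\,\mathbb{E}[\Delta g(Y)^2]$. For the first piece one computes $\mathbb{E}\chi(Y^*,j)-\mathbb{E}\chi(I_pY^*,j)=(1-p)\,\mathbb{P}(Y^*>j)$ directly from the kernel, and then the tail bound gives $(1-p)\mathbb{P}(Y^*>j)\le\frac{1-p}{p}\mathbb{P}(Y\ge j)$. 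The failure-rate step is exactly as you describe, namely
\[
\frac{\sum_{j}\Delta g(j)^2\,\mathbb{P}(Y\ge j)}{\sum_{j}\Delta g(j)^2\,\mathbb{P}(Y=j)}\;\le\;\frac{1}{h_Y^*},
\]
and the constant $\mu\bigl(1+\frac{1-p}{p\,h_Y^*}\bigr)$ falls out immediately. Your total-mass computation $\sum_k(\overline{P}_Y(k)-p\,\overline{P}_{Y^*}(k+1))$ plays no role here (and incidentally is not $1+(1-p)\mu$: note $\sum_{k\ge0}\overline{P}_{Y^*}(k+1)=\mathbb{E}Y^*=\sigma^2/\mu+\mu$, not $\mu$).
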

\begin{proof}
Our argument is based on that of \cite{dj13}.  As there, we also employ the kernel function defined by Klaassen \cite{k85}:
$$
\chi(i,j)=I(\lfloor\mu\rfloor\leq j<i)-I(i\leq j<\lfloor\mu\rfloor)-(\mu-\lfloor\mu\rfloor)I(j=\lfloor\mu\rfloor)\,.
$$
Then Lemma 5.2 of \cite{dj13} gives, for $g\in\mathcal{G}(Y)$,
\begin{multline}\label{eq:poincare}
\mathbb{E}[g(Y)^2]\leq\mu\sum_{j=0}^\infty\Delta g(j)^2\left[\mathbb{E}\chi(Y^*,j)-\mathbb{E}\chi(Y,j)\right]\\
=\mu\sum_{j=0}^\infty\Delta g(j)^2\left[\mathbb{E}\chi(Y^*,j)-\mathbb{E}\chi(I_pY^*,j)\right]
+\mu\sum_{j=0}^\infty\Delta g(j)^2\left[\mathbb{E}\chi(I_pY^*,j)-\mathbb{E}\chi(Y,j)\right]\,.
\end{multline}

Following the argument on page 517 of \cite{dj13}, using the stochastic ordering assumption we make here, the second term on the right-hand side of (\ref{eq:poincare}) may be bounded by $\mu\mathbb{E}[\Delta g(Y)^2]$. 

For the first term on the right-hand side of (\ref{eq:poincare}), we have that
$$
\mathbb{E}\chi(Y^*,j)-\mathbb{E}\chi(I_pY^*,j)=(1-p)[\mathbb{E}\chi(Y^*,j)-\mathbb{E}\chi(0,j)]=(1-p)\mathbb{P}(Y^*>j)\,,
$$
where this final equality follows from Eq$.$ (15) of \cite{dj13}.  Again employing our stochastic ordering assumption, we obtain
$$
\mathbb{E}\chi(Y^*,j)-\mathbb{E}\chi(I_pY^*,j)\leq\frac{1-p}{p}\mathbb{P}(Y\geq j)\,.
$$

We therefore have the bound
$$
\frac{\mathbb{E}[g(Y)^2]}{\mathbb{E}[\Delta g(Y)^2]} \leq \mu\left(1+\frac{1-p}{p}\left[\frac{\sum_{j=0}^\infty\Delta g(j)^2\mathbb{P}(Y\geq j)}{\sum_{j=0}^\infty\Delta g(j)^2\mathbb{P}(Y=j)}\right]\right)\,.
$$
The theorem then follows.
\end{proof}
\begin{remark}
\emph{In the case $p=1$ we obtain the bound of Theorem 1.1 of \cite{dj13}, as we would expect.
If there is a coupling of $(Y,Y^*)$ and $p\in(0,1]$ such that (\ref{eq:ndcond}) holds for all $x$, then the proof of Theorem \ref{thm:po}(i) shows that the assumptions of Theorem \ref{thm:poincare} above hold, and we have our upper bound on $R_Y$.}
\end{remark}
\begin{example}\label{eg:poincare}
\emph{We return to the setting of Example \ref{eg:po}, and let $Y=I_pZ$.  Then $\mu=p\lambda$ and 
$$
h_Y(0) =
\frac{\mathbb{P}(Y=0)}{\mathbb{P}(Y\geq0)}=\frac{(1-p)e^\lambda+p}{e^\lambda}\,.
$$  
Since $h_Y(j)=h_Z(j)$ for all $j\geq1$, we may use the increasing failure rate (IFR) property of the Poisson distribution to obtain the following bound from Theorem \ref{thm:poincare}:
\begin{equation} \label{eq:poincare2}
R_Y \leq \lambda\left(p+(1-p)\max\left\{\frac{e^\lambda-1}{\lambda},\frac{e^\lambda}{(1-p)e^\lambda+p}\right\}\right)\,.
\end{equation}
Expanding \eqref{eq:poincare2} in $\lambda$, we obtain
$ R_Y \leq \max \left( \lambda + \frac{(1-p)}{2} \lambda^2 + O(\lambda^3),  \lambda + p(1-p) \lambda^2 + O(\lambda^3) \right).$
In this case, the lower bound (\ref{eq:plb}) becomes $R_Y\geq p\lambda+p(1-p)\lambda^2$, showing that 
\eqref{eq:poincare2} is close to sharp for $p$ close to 1 and $\lambda$ small.
}
\end{example}

Applying Theorem \ref{thm:poincare} relies on both controlling the failure rate of $Y$ and finding a suitable $p$.  We conclude this section by showing that we can bound $p$, and hence $R_Y$, under the standard $c$-log-concavity condition (see \cite{cap09}).

\begin{corollary}\label{cor:poincare}
Let $Y$ be a non-negative, integer-valued random variable with $\mathbb{E}Y=\mu>0$.  Assume that there exists  $c >0$ such that
\begin{equation}\label{eq:clogconc}
\frac{ \pr(Y= k)^2 - \pr(Y= k+1) \pr(Y=k-1)}{ \pr(Y= k) \pr(Y=k+1)} = \frac{ \pr(Y=k)}{\pr(Y=k+1)} -  \frac{ \pr(Y=k-1)}{\pr(Y=k)}
\geq c
\end{equation}
for all $k\geq0$. Then
$$
R_Y \leq \frac{1}{c} \left( 1 + (1 - c \mu) \frac{ \pr(Y \geq 1)}{\pr(Y=0)} \right)\,.
$$
\end{corollary}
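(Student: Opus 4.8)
The plan is to deduce the corollary from Theorem~\ref{thm:poincare} by taking the specific value $p=c\mu$; the key observation is that the $c$-log-concavity condition \eqref{eq:clogconc} simultaneously supplies $p\in(0,1]$, the stochastic order $I_pY^*\leq_{st}Y+1$ required by Theorem~\ref{thm:poincare}, and the exact value of $h^*_Y$.

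First I would telescope \eqref{eq:clogconc}. Summing $\frac{\pr(Y=k)}{\pr(Y=k+1)}-\frac{\pr(Y=k-1)}{\pr(Y=k)}\geq c$ over $k=0,1,\dots,m$, with the convention $\pr(Y=-1)=0$ (so the $k=0$ instance reads $\pr(Y=0)/\pr(Y=1)\geq c$, which in particular forces $\pr(Y=0)>0$; if $\pr(Y=0)=0$ the asserted bound is vacuous), the left-hand side collapses and yields $\pr(Y=m)/\pr(Y=m+1)\geq(m+1)c$, that is,
$$
(m+1)\,\pr(Y=m+1)\leq c^{-1}\,\pr(Y=m)\qquad\text{for every }m\geq0 .
$$
Summing this inequality over $m\geq0$ gives $\mu=\sum_{j\geq1}j\,\pr(Y=j)\leq c^{-1}$, so that $p:=c\mu$ lies in $(0,1]$, as needed to invoke Theorem~\ref{thm:poincare}.

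Next I would verify $I_pY^*\leq_{st}Y+1$ with $p=c\mu$, i.e. $\pr(I_pY^*\geq x)\leq\pr(Y\geq x-1)$ for all $x$. For $x\leq1$ the right-hand side equals $1$ and there is nothing to check. For $x\geq2$, using that $Y^*\geq1$ almost surely together with the size-bias formula \eqref{eq:sizebias},
$$
\pr(I_pY^*\geq x)=c\mu\,\pr(Y^*\geq x)=c\sum_{j\geq x}j\,\pr(Y=j)\leq c\sum_{j\geq x}c^{-1}\pr(Y=j-1)=\pr(Y\geq x-1),
$$
where the inequality is the telescoped bound above applied with $m=j-1$. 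Theorem~\ref{thm:poincare} then yields $R_Y\leq\mu\bigl(1+\tfrac{1-c\mu}{c\mu\,h^*_Y}\bigr)$.

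Finally I would identify $h^*_Y$. Since $c>0$, \eqref{eq:clogconc} forces $\pr(Y=k)^2>\pr(Y=k-1)\pr(Y=k+1)$, so $Y$ is log-concave; a log-concave distribution supported on an interval of non-negative integers starting at $0$ has non-decreasing failure rate, whence $h^*_Y=h_Y(0)=\pr(Y=0)$. Substituting this and writing $P_0=\pr(Y=0)$, the bound becomes $\mu+\tfrac{1-c\mu}{cP_0}$, and a short computation shows this equals $\tfrac1c\bigl(1+(1-c\mu)\tfrac{1-P_0}{P_0}\bigr)=\tfrac1c\bigl(1+(1-c\mu)\tfrac{\pr(Y\geq1)}{\pr(Y=0)}\bigr)$, which is the claimed bound. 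There is no genuinely difficult step here; the two points requiring a little care are checking that $c\mu\leq1$ so that $p=c\mu$ is an admissible probability, and invoking the log-concave $\Rightarrow$ increasing-failure-rate implication to pin down $h^*_Y=\pr(Y=0)$.
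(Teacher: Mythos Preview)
Your proof is correct and follows essentially the same route as the paper: telescope \eqref{eq:clogconc} to obtain $\ell\,\pr(Y=\ell)\le c^{-1}\pr(Y=\ell-1)$, use this to verify the tail inequality $I_pY^*\le_{st}Y+1$ with $p=c\mu$, invoke the log-concave $\Rightarrow$ IFR implication to get $h^*_Y=\pr(Y=0)$, and substitute into Theorem~\ref{thm:poincare}. The only differences are cosmetic: you explicitly check $c\mu\le1$ (the paper leaves this implicit, though it follows from the tail inequality at $k=0$) and you carry out the final algebraic simplification in detail.
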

\begin{proof}
We first show that, if $p=\mu c$, $I_pY^*\leq_{st}Y+1$.  With this, we will then see that the bound follows from Theorem \ref{thm:poincare}.  That is, we begin by showing that, as in \eqref{eq:tails}, $p\mathbb{P}(Y^*\geq j)\leq\mathbb{P}(Y+1\geq j)$
for all $j\geq1$, i.e.,
\begin{equation} \label{eq:needed}
p\leq\inf_{k\geq0}\left\{\frac{\mathbb{P}(Y\geq k)}{\mathbb{P}(Y^*\geq k+1)}\right\}\,.
\end{equation}
We observe that, summing the collapsing sum in \eqref{eq:clogconc} from $k = 0$ to $\ell-1$, we obtain:
\begin{equation}\label{eq:ratio}
\frac{\pr(Y= \ell-1)}{\pr(Y=\ell)}\geq c \ell \,,
\end{equation}
for all $\ell\geq0$. Fixing some $k\geq0$, we have by \eqref{eq:ratio} that
\begin{eqnarray*}
\mathbb{P}(Y^*\geq k+1)&=& \sum_{\ell =k+1}^\infty \pr(Y^* = \ell) \\
& = & \frac{1}{\mu} \sum_{\ell = k+1}^\infty \ell \pr(Y = \ell) \\
& \leq & \frac{1}{\mu c} \sum_{\ell =k+1}^\infty \pr(Y = \ell-1) =  \frac{1}{\mu c} \pr(Y \geq k),
\end{eqnarray*}
and the result \eqref{eq:needed} follows with $p = \mu c$.

Now, \eqref{eq:clogconc} implies that $Y$ is log-concave, which in turn implies the IFR property, so that $h_Y^* = 
h_Y(0) = \pr(Y= 0)$. Substituting this into Theorem \ref{thm:poincare} we obtain our upper bound on $R_Y$. 

\end{proof}

Notice that if $Y$ is Poisson with mean $\mu$, we can take $c = 1/\mu$ in \eqref{eq:clogconc}, to recover the
fact that $p = 1$ and deduce the standard Poisson Poincar\'{e} inequality $R_Y \leq \mu$.
Note also that in \cite{joh17}, the (stronger) fact that $R_Y \leq 1/c$ is proved under the $c$-log-concavity condition \eqref{eq:clogconc}, but this requires use of the discrete Bakry-\'{E}mery theory of \cite{cap09}.

\section{Normal approximation}\label{sec:norm}

Finally, we show how our assumptions  will carry over to 
normal approximation, again based on ideas used in Stein's method.  Several different coupling constructions are used in Stein's method for normal approximation; see \cite{cgs11} for an introduction to these, and to the area more generally.  In line with our work in Section \ref{sec:pois}, we will consider normal approximation using the size-biased coupling; again we will consider a
non-negative random variable $W$ and write $W^*$ for its  size-biased version. 
 In this setting, a natural assumption under which normal approximation results for $W$ have been established is boundedness of $W^*$; see Theorems 5.6 and 5.7 of \cite{cgs11}, for example.  

We  prove an analogous normal approximation theorem which allows the random variable $W$ to be contaminated with some independent noise, as we did in the Poisson approximation setting in Section \ref{sec:noise}.  
Our bound will be stated in terms of the Kolmogorov distance, defined by
$$
d_K(\mathcal{L}(Y),\mathcal{L}(Z))=\sup_{z\in\mathbb{R}}|\mathbb{P}(Y\leq z)-\mathbb{P}(Z\leq z)|\,,
$$ 
and for a non-negative random variable $W$ we will write
$$
D_W=\mathbb{E}\left|\mathbb{E}\left[1-\frac{\mathbb{E}W}{\mbox{Var}(W)}(W^*-W)\bigg|W\right]\right|\,.
$$

Before stating the main theorem of this section, we note that for any non-negative random variable $W$, $W\leq_{st}W^*$, so that it is always possible to couple $W$ and $W^*$ such that $W\leq W^*$ almost surely.  We also note that the existence of a size-biased coupling such that $W\leq W^*\leq W+c$ is equivalent to the assertion that $W^*\leq_{st}W+c$; see Section 7 of \cite{ab15}.
\begin{theorem}\label{thm:normal}
Let $W$ and $X$ be independent, non-negative random variables, with $0\leq X\leq a$ a$.$s$.$ for some $a>0$.  Assume also that 
\begin{itemize}
\item $W$ and $W^*$ are coupled such that $W\leq W^*\leq W+c$ for some $c>0$, and
\item $X$ and $X^*$ are coupled such that $X\leq X^*$.
\end{itemize}
Let $Y=W+X$, $\mu=\mathbb{E}Y$ and $\sigma^2=\mbox{Var}(Y)$.  Define $p=\mathbb{E}W/\mu$ and
$$
\widetilde{Y}=\frac{Y-\mu}{\sigma}\,.
$$ 
Then
$$
d_K(\mathcal{L}(\widetilde{Y}),\mbox{N}(0,1))\leq \frac{\mbox{Var}(W)}{\sigma^2}D_W+\frac{\mbox{Var}(X)}{\sigma^2}D_X+0.82\frac{c^2\mu}{\sigma^3}+\frac{c}{\sigma}+\frac{\mu}{\sigma^2}a(1-p)\,.
$$
\end{theorem}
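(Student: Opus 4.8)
The plan is to run Stein's method for normal approximation with the size-biased coupling, exploiting the decomposition $Y=W+X$ into independent pieces and tracking the contribution of the noise $X$ separately from that of $W$. Following the construction in \eqref{eq:sb} (cf.\ Corollary 2.1 of \cite{cgs11}), since $W$ and $X$ are independent I would take as a size-biased version of $Y$
\[
Y^*=I(W^*+X)+(1-I)(W+X^*),
\]
with $I\sim\mathrm{Be}(p)$, $p=\mathbb{E}W/\mu$, independent of everything else. Set $T=Y^*-Y=I(W^*-W)+(1-I)(X^*-X)$; then $T\ge 0$ a.s., $T\le c$ on $\{I=1\}$, while on $\{I=0\}$ we have $0\le T=X^*-X\le a$ (since $0\le X^*\le a$). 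Hence $(T-c)^+\le(1-I)(X^*-X)$, so that $\mathbb{E}[(T-c)^+]\le(1-p)\mathrm{Var}(X)/\mathbb{E}X\le(1-p)a$, while the independence structure gives $\mathbb{E}[T\mid W,X]=p\,\mathbb{E}[W^*-W\mid W]+(1-p)\,\mathbb{E}[X^*-X\mid X]$.

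It suffices to bound $|\mathbb{E}[f_z'(\widetilde Y)-\widetilde Y f_z(\widetilde Y)]|$ uniformly in $z$, where $f_z$ solves $f'(w)-wf(w)=\II(w\le z)-\Phi(z)$; I would use the standard estimates $\|f_z\|\le\sqrt{2\pi}/4$, $\|f_z'\|\le 1$ and the Lipschitz-type bound on $w\mapsto wf_z(w)$ recalled in \cite{cgs11}. The size-bias identity $\mathbb{E}[(Y-\mu)g(Y)]=\mu\,\mathbb{E}[g(Y^*)-g(Y)]$ applied to $g(y)=f_z((y-\mu)/\sigma)$ gives $\mathbb{E}[\widetilde Y f_z(\widetilde Y)]=\tfrac{\mu}{\sigma^2}\mathbb{E}\big[\int_0^T f_z'(\widetilde Y+s/\sigma)\,ds\big]$, whence the elementary identity
\[
\mathbb{E}[f_z'(\widetilde Y)-\widetilde Y f_z(\widetilde Y)]=\mathbb{E}\Big[f_z'(\widetilde Y)\big(1-\tfrac{\mu}{\sigma^2}T\big)\Big]-\tfrac{\mu}{\sigma^2}\,\mathbb{E}\Big[\int_0^T\!\big(f_z'(\widetilde Y+\tfrac{s}{\sigma})-f_z'(\widetilde Y)\big)\,ds\Big]
\]
(the two terms are genuine error terms because $\mathbb{E}T=\sigma^2/\mu$). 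For the first term I would condition on $(W,X)$ and use $\sigma^2=\mathrm{Var}(W)+\mathrm{Var}(X)$, $\mu p=\mathbb{E}W$, $\mu(1-p)=\mathbb{E}X$ to rewrite $1-\tfrac{\mu}{\sigma^2}\mathbb{E}[T\mid W,X]$ as $\tfrac{\mathrm{Var}(W)}{\sigma^2}\big(1-\tfrac{\mathbb{E}W}{\mathrm{Var}(W)}\mathbb{E}[W^*-W\mid W]\big)+\tfrac{\mathrm{Var}(X)}{\sigma^2}\big(1-\tfrac{\mathbb{E}X}{\mathrm{Var}(X)}\mathbb{E}[X^*-X\mid X]\big)$, so that $\|f_z'\|\le 1$ bounds it by $\tfrac{\mathrm{Var}(W)}{\sigma^2}D_W+\tfrac{\mathrm{Var}(X)}{\sigma^2}D_X$.

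For the second term I would split $\int_0^T$ at $s=c$. The part $s>c$ is supported on $\{I=0,\,X^*-X>c\}$; bounding it via $\|f_z'\|$ and combining with the corresponding piece of the first term contributes (after careful accounting of constants) the term $\tfrac{\mu}{\sigma^2}a(1-p)$. On $s\le c$, writing $f_z'(w+u)-f_z'(w)=\big((w+u)f_z(w+u)-wf_z(w)\big)+\big(\II(w+u\le z)-\II(w\le z)\big)$, the smooth part is handled with the Lipschitz bound on $wf_z(w)$ together with $\mathbb{E}|\widetilde Y|\le(\mathbb{E}\widetilde Y^2)^{1/2}=1$, giving $\tfrac{\mu c^2}{2\sigma^3}\big(1+\tfrac{\sqrt{2\pi}}4\big)\le 0.82\,\tfrac{c^2\mu}{\sigma^3}$, while the indicator part is controlled by estimating $\mathbb{P}(z-c/\sigma<\widetilde Y\le z)$, which produces the term $c/\sigma$; this step runs exactly as in the proof of Theorem 5.6 of \cite{cgs11}. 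Summing the four contributions gives the stated bound.

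The step I expect to be the main obstacle is the estimate of $\mathbb{P}(z-c/\sigma<\widetilde Y\le z)$: no concentration inequality is available a priori, and the standard resolution is to compare $\widetilde Y$ with $\mathrm{N}(0,1)$ and solve the resulting recursion for $d_K$. The other delicate point is the bookkeeping needed to isolate exactly the excess $(T-c)^+$ of the jump beyond $c$ — which lives only on the low-probability event that it is the noise term that is being size-biased — so that the constants in the bound come out precisely as claimed.
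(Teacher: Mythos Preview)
Your proposal is essentially the paper's proof: the same mixture representation $Y^*=I_p(W^*+X)+(1-I_p)(W+X^*)$, the same Stein decomposition into a ``variance term'' and an ``integral term'', the same splitting of the variance term via $\sigma^2=\mbox{Var}(W)+\mbox{Var}(X)$ and $\mu p=\mathbb{E}W$, $\mu(1-p)=\mathbb{E}X$ to produce $\tfrac{\mbox{Var}(W)}{\sigma^2}D_W+\tfrac{\mbox{Var}(X)}{\sigma^2}D_X$, and the same handling of the integral term by invoking the Chen--Goldstein--Shao machinery.  Two small points of divergence are worth noting.

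First, the paper appeals to Theorem~5.7 (not 5.6) of \cite{cgs11}, which already treats the case of an overshoot beyond $c$ and delivers directly the three-part bound $0.82\,c^2\mu/\sigma^3+c/\sigma+\tfrac{\mu}{\sigma^2}\mathbb{E}[(Y^*-Y)\II(Y^*-Y>c)]$; the paper then bounds the last expectation by $a(1-p)$ via the coupling.  Your manual splitting at $s=c$ reproduces this, but the quantity that emerges from Theorem~5.7 is $\mathbb{E}[T\,\II(T>c)]$ rather than $\mathbb{E}[(T-c)^+]$, and no ``combining with the corresponding piece of the first term'' is needed --- the constants come out cleanly.

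Second, your anticipated obstacle (handling the indicator term by comparing $\widetilde Y$ to $\mathrm{N}(0,1)$ and solving a recursion in $d_K$) is a misremembering of the \cite{cgs11} argument: Theorems~5.6/5.7 there obtain the $c/\sigma$ term \emph{directly}, essentially by another application of the size-bias identity to bound $\tfrac{\mu}{\sigma^2}\mathbb{E}[T\,\II(z-c/\sigma<\widetilde Y\le z)]$, with no self-referential $d_K$ term and hence no $(1-c/\sigma)^{-1}$ factor.  Following the recursion route would give a strictly weaker inequality than the one stated.
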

\begin{proof}
Abusing notation, let $\widetilde{Y}^*=\frac{Y^*-\mu}{\sigma}$ and, for fixed $z\in\mathbb{R}$, let $f:\mathbb{R}\mapsto\mathbb{R}$ be the solution to the Stein equation $f^\prime(w)-wf(w)=\II(w\leq z)-\Phi(z)$, where $\Phi$ is the distribution function of the standard normal distribution.  Note the standard bound $|f^\prime(w)|\leq1$ (see Lemma 2.3 of \cite{cgs11}, for example).

Now, following the proof of Theorem 5.7 of \cite{cgs11}, we write
\begin{align}
\mathbb{P}(\widetilde{Y}\leq z)-\Phi(z) & = \mathbb{E}\left[f^\prime(\widetilde{Y})-\widetilde{Y}f(\widetilde{Y})\right] \notag \\
 & = \mathbb{E}\left[f^\prime(\widetilde{Y})\left(1-\frac{\mu}{\sigma}(\widetilde{Y}^*-\widetilde{Y})\right)-\frac{\mu}{\sigma}\int_0^{\widetilde{Y}^*-\widetilde{Y}}\left(f^\prime(\widetilde{Y}+t)-f^\prime(\widetilde{Y})\right)\,dt\right]\,.
\label{eq:n1}
\end{align}
The absolute value of the final term on the right-hand side of (\ref{eq:n1}) is bounded, as in Theorem 5.7 of \cite{cgs11}, by
$$
0.82\frac{c^2\mu}{\sigma^3}+\frac{c}{\sigma}+\frac{\mu}{\sigma^2}\mathbb{E}\left[(Y^*-Y)\II(Y^*-Y>c)\right]\,.
$$
Note that we can write
$Y^*=I_p(W^*+X)+(1-I_p)(W+X^*)$, where $I_p$ is a Bernoulli variable with mean $p$ independent of all else; see Corollary 2.1 of \cite{cgs11}. Conditioning on $I_p$, we then have 
$$
\mathbb{E}\left[(Y^*-Y)\II(Y^*-Y>c)\right]\leq a(1-p)\,,
$$
by the assumptions of our theorem.

We use our representation of $Y^*$, and the independence of $W$ and $X$, to write the first term on the right-hand side of (\ref{eq:n1}) as
\begin{multline*}
\frac{\mbox{Var}(W)}{\sigma^2}\mathbb{E}\left[f^\prime(\widetilde{Y})\left(1-\frac{\mu}{\mbox{Var}(W)}I_p(W^*-W)\right)\right]\\
+\frac{\mbox{Var}(X)}{\sigma^2}\mathbb{E}\left[f^\prime(\widetilde{Y})\left(1-\frac{\mu}{\mbox{Var}(X)}(1-I_p)(X^*-X)\right)\right]\,.
\end{multline*}
Conditioning on $W$ and $X$, and applying the bound $|f^\prime(w)|\leq1$, this may be bounded by $\sigma^{-2}\left(\mbox{Var}(W)D_W+\mbox{Var}(X)D_X\right)$.
\end{proof}

\subsection{Application: the lightbulb process}

Consider the following model, motivated by a pharmaceutical study of dermal patches designed to activate particular receptors, though often phrased in terms of lightbulbs being switched on and off;  see \cite{gz11} and references therein.  We begin with $n$ lightbulbs, all switched off.  At time $r$ (for $r=1,\ldots,n$), exactly $r$ of the $n$ lightbulbs are chosen, uniformly at random, and their state switched.  One random variable of interest is $W$, the number of lightbulbs switched on after time $n$.

Goldstein and Zhang \cite[Theorem 1.1]{gz11}  prove a bound for normal approximation of $W$.  Combining their bound with our Theorem \ref{thm:normal}, we see the effect of `contaminating' $W$ by $X$, which (for simplicity) we define to have a $\mbox{Bin}(k,\alpha)$ distribution, independent of $W$.  Other types of contamination can also be investigated in this framework.

Also for simplicity, we will restrict attention to the case where $n$ is even.  In this case we have $\mathbb{E}W=n/2$, and we let $\tau^2$ denote the variance of $W$, which is equal to $(n/4)(1+O(e^{-n}))$; see \cite{gz11} for further details, and for a discussion of the differences between the cases where $n$ is even and $n$ is odd.  Goldstein and Zhang \cite{gz11} also provide a coupling such that $W\leq W^*\leq W+2$ a$.$s$.$, and show that 
$$
D_W\leq\frac{n}{2\tau^2}\left(\frac{1}{2\sqrt{n}}+\frac{1}{2n}+\frac{1}{3}e^{-n/2}\right)\,.
$$   
Straightforward calculations (in the spirit of Section 5.3 of \cite{cgs11}) show that $D_X\leq\sqrt{\frac{\alpha}{(1-\alpha)k}}$.  Hence, letting $Y=W+X$, the bound of Theorem \ref{thm:normal} becomes
\begin{proposition}
\begin{multline*}
d_K(\mathcal{L}(\widetilde{Y}),\mbox{N}(0,1))\leq \frac{1}{\sigma^2}\left\{\frac{n}{2}\left(\frac{1}{2\sqrt{n}}+\frac{1}{2n}+\frac{1}{3}e^{-n/2}\right)+\alpha\sqrt{\alpha(1-\alpha)k}\right\}\\
+\frac{1.64n}{\sigma^3}+\frac{2}{\sigma}+\frac{\alpha k^2n}{\sigma^2(n+2\alpha k)}\,,
\end{multline*}
where $\sigma^2=\tau^2+\alpha(1-\alpha)k$.
\end{proposition}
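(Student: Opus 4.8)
The plan is to check that $Y=W+X$ satisfies the hypotheses of Theorem~\ref{thm:normal} and then substitute the relevant quantities into its conclusion. The ingredients are as follows. Goldstein and Zhang \cite{gz11} supply a size-biased coupling with $W\le W^*\le W+2$, so we take $c=2$; they also give $\mathbb{E}W=n/2$, $\mbox{Var}(W)=\tau^2$, and the bound on $D_W$ quoted above. Since $X\sim\mbox{Bin}(k,\alpha)$ satisfies $0\le X\le k$ almost surely we take $a=k$, and writing $X=\sum_{i=1}^k B_i$ as a sum of i.i.d.\ $\mbox{Be}(\alpha)$ variables, the standard size-bias construction (choose a uniform index $J$ and set $B_J=1$) yields a coupling with $X\le X^*\le X+1$; in particular $X\le X^*$, as required. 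Thus all the coupling hypotheses of Theorem~\ref{thm:normal} hold, with $\mu=\mathbb{E}Y=n/2+\alpha k$, $\sigma^2=\mbox{Var}(Y)=\tau^2+\alpha(1-\alpha)k$ (by independence of $W$ and $X$), and $p=\mathbb{E}W/\mu=n/(n+2\alpha k)$.

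The only short computation needed beyond quoting \cite{gz11} and Theorem~\ref{thm:normal} is the bound on $D_X$ (the calculation ``in the spirit of Section~5.3 of \cite{cgs11}'' referred to above). From the coupling, $\mathbb{E}[X^*-X\mid X]=1-X/k$, so that $\mathbb{E}\big[1-\tfrac{\mathbb{E}X}{\mbox{Var}(X)}(X^*-X)\,\big|\,X\big]=(X/k-\alpha)/(1-\alpha)$, whence $D_X=\tfrac{1}{k(1-\alpha)}\mathbb{E}|X-\alpha k|\le\tfrac{1}{k(1-\alpha)}\sqrt{\mbox{Var}(X)}=\sqrt{\alpha/((1-\alpha)k)}$ by Cauchy--Schwarz.

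It then remains to substitute into the bound of Theorem~\ref{thm:normal} and simplify. The term $\frac{\mbox{Var}(W)}{\sigma^2}D_W$ combines with the factor $\frac{n}{2\tau^2}$ appearing in the Goldstein--Zhang bound on $D_W$, the $\tau^2$'s cancelling, to give $\frac{1}{\sigma^2}\cdot\frac{n}{2}\big(\frac{1}{2\sqrt n}+\frac{1}{2n}+\frac13 e^{-n/2}\big)$; the term $\frac{\mbox{Var}(X)}{\sigma^2}D_X$ simplifies via $\alpha(1-\alpha)k\cdot\sqrt{\alpha/((1-\alpha)k)}=\alpha\sqrt{\alpha(1-\alpha)k}$; and with $c=2$ the terms $0.82\,c^2\mu/\sigma^3$ and $c/\sigma$ give contributions of the form $n/\sigma^3$ and $2/\sigma$. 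For the residual term, inspecting the proof of Theorem~\ref{thm:normal} shows it is governed by $\mathbb{E}[(Y^*-Y)\II(Y^*-Y>c)]$, which here in fact vanishes, since $X^*-X\le1<c$ on the $(1-p)$-branch of the representation $Y^*=I_p(W^*+X)+(1-I_p)(W+X^*)$ and $W^*-W\le c$ on the $p$-branch; a fortiori it is at most $\frac{\mu}{\sigma^2}a(1-p)$, which, using $\mu(1-p)=\mathbb{E}X=\alpha k$ and $a=k$, is of the displayed form $\alpha k^2/\sigma^2$. Collecting the five contributions yields the Proposition. There is no conceptual obstacle: this is a direct application of Theorem~\ref{thm:normal}, and the only points requiring care are the $D_X$ estimate above and the bookkeeping of the $\mu$-dependent terms, recalling that $\mu=n/2+\alpha k$ rather than $n/2$.
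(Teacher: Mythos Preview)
Your approach is exactly the paper's: verify the hypotheses of Theorem~\ref{thm:normal} for $Y=W+X$ with $c=2$, $a=k$, quote the Goldstein--Zhang coupling and $D_W$ bound, compute $D_X$, and substitute. Your $D_X$ calculation is correct and is precisely the ``Section~5.3 of \cite{cgs11}'' computation the paper alludes to. Your observation that the residual term $\mathbb{E}[(Y^*-Y)\II(Y^*-Y>c)]$ actually vanishes here (since $X^*-X\le 1<2$) is a nice sharpening the paper does not mention.

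You are also right to flag the $\mu$-bookkeeping. A direct substitution into Theorem~\ref{thm:normal} with $\mu=n/2+\alpha k$ gives $0.82\,c^2\mu/\sigma^3=3.28\mu/\sigma^3=(1.64n+3.28\alpha k)/\sigma^3$ and $\mu a(1-p)/\sigma^2=\alpha k^2/\sigma^2$, whereas the Proposition displays $1.64n/\sigma^3$ and $\alpha k^2 n/(\sigma^2(n+2\alpha k))=p\,\alpha k^2/\sigma^2$; in both places the paper has effectively replaced $\mu$ by $\mathbb{E}W=n/2$. For the final term this is harmless because, as you note, the residual is zero and any non-negative expression is a valid bound. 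For the third term, however, $1.64n/\sigma^3$ is strictly smaller than what Theorem~\ref{thm:normal} delivers, so the Proposition as stated does not follow from the theorem without the extra $3.28\alpha k/\sigma^3$. Your proof is sound; the discrepancy is a slip in the paper's displayed constant, not in your argument.
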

So, for example, if $\alpha=O(1)$ and $k=O(n^{-1/4})$ this bound is of the same order, $O(n^{-1/2})$, as the bound on $d_K(\mathcal{L}(\widetilde{W}),\mbox{N}(0,1))$ given by Theorem 1.1 of \cite{gz11}, where $\widetilde{W}=\frac{W-n/2}{\tau}$.

\appendix

\section{Proof of Lemma \ref{lem:key}}\label{sec:lem}

Let $p_Y(j)=\mathbb{P}(Y=j)$.  Substituting the Stein--Chen equation \eqref{eq:stein}, we have
\begin{eqnarray}
\pr( Y \in A) - \Pi_\lambda(A) & = & \sum_{j=0}^\infty p_Y(j) \left( \II( j \in A) - \Pi_{\lambda}(A) \right) \nonumber \\
& = & \sum_{j=0}^\infty p_Y(j) \left( 
\lambda g_A(j+1) - j g_A(j)  \right) \nonumber \\
& = & \lambda \sum_{j=0}^\infty p_Y(j)   g_A(j+1) - \mu \sum_{j=0}^\infty  p_{Y^*}(j) g_A(j)   \label{eq:todo} \\
& = & \sum_{k=0}^\infty \left( \lambda \ol{P}_Y(k)    - \mu  \ol{P}_{Y^*}(k+1)  \right) \Delta g_A(k)\,,   \nonumber
\end{eqnarray}
where we deal with the first term in (\ref{eq:todo}) since for any function $h$ with $h(0) = 0$, summation by parts gives
$$ \sum_{j=0}^\infty p_Y(j) h(j+1) =  \sum_{j=0}^\infty p_Y(j) \left( \sum_{k=0}^j \Delta h(k) \right) = \sum_{k=0}^\infty \Delta h(k) \sum_{j=k}^\infty p_Y(j)
= \sum_{k=0}^\infty  \Delta h(k) \ol{P}_{Y}(k).$$
The result follows taking $h = g_A$, since we know $g_A(0) = 0$ by assumption.


\begin{thebibliography}{99}
\bibitem{ab15} R.~Arratia and P.~Baxendale (2015). Bounded size bias coupling: a Gamma function bound, and universal Dickman-function behavior. \emph{Probab. Theory Related Fields} {\bf 162}, 411-429.

\bibitem{bb90} F.~Ball and A.~D.~Barbour (1990). Poisson approximation for some epidemic models.  \emph{J. Appl. Probab.} {\bf 27}, 479--490.

\bibitem{barbour} A.~D.~Barbour, L.~Holst and S.~Janson (1992). \emph{Poisson Approximation}. Oxford University Press, Oxford.

\bibitem{bk00} M.~V.~Boutsikas and M.~V.~Koutras (2000). A bound for the distribution of the sum of discrete associated or negatively associated random variables. \emph{Ann. Appl. Probab.} {\bf 10}, 1137--1150.

\bibitem{cap09} P.~Caputo, P.~Dai~Pra and G.~Posta (2009). Convex entropy decay via the
                  {B}ochner-{B}akry-{E}mery approach. \emph{Ann. Inst. Henri Poincar\'e Probab. Stat.} {\bf 45}, 734--753.

\bibitem{cgs11} L.~H.~Y.~Chen, L.~Goldstein and Q.-M.~Shao (2011). \emph{Normal Approximation by Stein's Method}. Springer, Berlin.

\bibitem{cgj15} N.~Cook, L.~Goldstein and T.~Johnson (2017). Size biased couplings and the spectral gap for random regular graphs. \emph{Ann. Probab.}, to appear.

\bibitem{d13} F.~Daly (2013). Compound Poisson approximation with association or negative association via Stein's method. \emph{Electron. Commun. Probab.} {\bf 18}(30), 1--12.

\bibitem{dj13} F.~Daly and O.~Johnson (2013). Bounds on the Poincar\'e constant under negative dependence. \emph{Stat. Probab. Lett.} {\bf 83}, 511--518.

\bibitem{dlu12} F.~Daly, C.~Lef\`evre and S.~Utev (2012). Stein's method and stochastic orderings. \emph{Adv. Appl. Prob.} {\bf 44}, 343--372.

\bibitem{e05} T.~Erhardsson (2005). Stein's method for Poisson and compound Poisson approximation. In: A.~D.~Barbour and L.~H.~Y.~Chen (Eds.) \emph{An Introduction to Stein's method}. IMS Lect. Notes Ser. Nat. Univ. Singap. 4, Singapore University Press, Singapore.

\bibitem{epw67} J.~D.~Esary, F.~Proschan and D.~W.~Walkup (1967). Association of random variables, with applications. \emph{Ann. Math. Statist.} {\bf 44}, 1466-1474. 

\bibitem{gz11} L.~Goldstein and H.~Zhang (2011). A Berry--Esseen theorem for the lightbulb process. \emph{Adv. Appl. Prob.} {\bf 43}, 875--898.

\bibitem{joh17} O.~T.~Johnson (2017). A discrete log-{S}obolev inequality under a {Bakry-\'Emery} type condition.
 \emph{ Ann. Inst. Henri Poincar\'e Probab. Stat. (to appear)}. See also: {\tt arxiv:1507.06268}.

\bibitem{jp83} K.~Joag-Dev and F.~Proschan (1983). Negative association of random variables, with applications. \emph{Ann. Statist.} {\bf 11}, 286--295.

\bibitem{k85} C.~Klaassen (1985). On an inequality of Chernoff. \emph{Ann. Probab.} {\bf 13}, 966--974.

\bibitem{m86} A.~Martin-L\"of (1986). Symmetric sampling procedures, general epidemic processes and their threshold limit theorems. \emph{J. Appl. Prob.} {\bf 23}, 265--282. 

\end{thebibliography}
\end{document}